 \def\@textbottom{\vskip \z@ \@plus 1pt}
 \let\@texttop\relax
\numberwithin{equation}{section}
\newtheorem{theorem}{Theorem}
\newtheorem{proposition}[theorem]{Proposition}
\newtheorem{lemma}[theorem]{Lemma}
\newtheorem{cor}[theorem]{Corollary}
\theoremstyle{definition}
\newtheorem{remark}[theorem]{Remark}
\newtheorem{definition}[theorem]{Definition}
\numberwithin{theorem}{section}
\newcommand{\polymake}{\texttt{poly\-make}\xspace}
\newcommand{\pro}[2]{\langle #1, #2 \rangle}
\newcommand{\N}{\mathbb{N}}
\newcommand{\C}{\mathbb{C}}
\newcommand{\Z}{\mathbb{Z}}
\newcommand{\Q}{\mathbb{Q}}
\newcommand{\R}{\mathbb{R}}
\renewcommand{\P}{\mathbb{P}}
\newcommand{\onev}{\mathbf{1}}
\newcommand{\V}{\mathcal{V}}
\newcommand{\Est}{E_{\rm st}}
\def\MR{M_\R}
\def\NR{N_\R}
\def\hstpq{h_{\rm st}^{p,q}}
\def\lolra{{\Longleftrightarrow}}
\DeclareMathOperator{\conv}{conv}
\DeclareMathOperator{\cone}{cone}
\DeclareMathOperator{\aff}{aff}
\title[On smooth Gorenstein polytopes]{On smooth Gorenstein polytopes}
\author{Benjamin Lorenz}
\address{Goethe-Universit\"at, FB 12 - Institut f\"ur Mathematik,
Robert-Mayer-Stra\ss e 10, D-60325 Frankfurt am Main}
\email{\href{mailto:blorenz@math.uni-frankfurt.de}{blorenz@math.uni-frankfurt.de}}
\author{Benjamin Nill}
\address{Case Western Reserve University, Math Department, Yost Hall 242,
10900 Euclid Avenue, Cleveland, Ohio 44106-7058}
\email{\href{mailto:benjamin.nill@case.edu}{benjamin.nill@case.edu}}
\begin{document}

\begin{abstract}
A Gorenstein polytope of index $r$ is a lattice polytope whose $r$th dilate is a reflexive polytope. These objects are of interest in combinatorial commutative algebra and enumerative combinatorics, and play a crucial role in Batyrev's and Borisov's computation of Hodge numbers of mirror-symmetric generic Calabi-Yau complete intersections. In this paper, we report on what is known about smooth Gorenstein polytopes, i.e., Gorenstein polytopes whose normal fan is unimodular. We classify $d$-dimensional smooth Gorenstein polytopes with index larger than $(d+3)/3$. Moreover, we use a modification of {\O}bro's algorithm to achieve classification results for smooth Gorenstein polytopes in low dimensions. The first application of these results is a database of all toric Fano $d$-folds whose anticanonical divisor is divisible by an integer $r$ satisfying $r \ge d-7$. As a second application we verify that there are only finitely many families of Calabi-Yau complete intersections of fixed dimension that are associated to a smooth Gorenstein polytope via the Batyrev-Borisov construction. 
\end{abstract}

\maketitle

\section{Introduction from a combinatorial viewpoint}

\subsection{The combinatorial setting}
\label{comb-intro}
Let us start by introducing the notions of reflexive and Gorenstein polytopes. We refer the reader to \cite{BN08} and \cite{Nil05} for more details and the algebro-geometric background. Let us fix a dual pair of lattices $M$ and $N$. We define $\MR = M \otimes_\Z \R$ and its dual vectorspace $\NR$ analogously. A {\em lattice polytope} $P \subset \MR$ is the convex hull of finitely many lattice points (elements in the lattice $M$). If such a lattice polytope contains the origin in its interior, then the {\em dual polytope} $P^*$ is defined as
\[P^* = \{ y\in \NR \,|\, \langle y,x \rangle \geq -1 \text{ for all } x\in P\}.\]
We remark that $P^*$ does not have to be a lattice polytope anymore. A {\em reflexive polytope} $P \subseteq \MR$ is a lattice polytope containing the origin in its interior such that $P^*$ is also a lattice polytope. A lattice polytope $P$ is {\em Gorenstein of index $r$}, if $r P$ is a reflexive polytope up to translation by a lattice point. This index is uniquely determined. In other words, Gorenstein polytopes of index $r$ are in one-to-one correspondence to reflexive polytopes 
that are `divisible' by $r$ (i.e., if $v$ is a vertex of such a reflexive polytope $\tilde{P}$, then $(\tilde{P}-v)/r$ is a lattice polytope). Two lattice polytopes are considered {\em isomorphic} (or {\em unimodularly equivalent}), if there is a lattice automorphism mapping their vertex sets onto each other. 
In each dimension there exist only finitely many reflexive polytopes up to isomorphisms. They are known up to dimension four by massive 
computer calculations by Kreuzer and Skarke \cite{classreflexive3, classreflexive4}. Recently, Skarke described a procedure how to possibly extend their algorithm to Gorenstein polytopes \cite{skarke}.

\subsection{Smooth Gorenstein polytopes of large index}
A $d$-polytope is called {\em simple}, if each vertex is contained in precisely $d$ edges. A $d$-dimensional lattice polytope $P$ is called {\em smooth}, if $P$ is simple and at each vertex $v$ the primitive edge directions $v_1-v, \ldots, v_d-v$ form a lattice basis. Alternatively, the associated normal fan is unimodular, respectively, its associated toric variety is nonsingular. We refer to \cite{toricbook} for standard results in toric geometry.

\medskip

{\O}bro described in \cite{oebro} an algorithm which has been used to classify all smooth reflexive polytopes for $d\le 8$. With an improved implementation, this was extended to $d=9$ by Andreas Paffenholz and the first author, see~\cite{sfpdim9}. In this paper we apply a slightly modified version of his algorithm to compute high-dimensional smooth reflexive polytopes that are highly divisible. The algorithmic outcome of these computations is summarized in Table~\ref{fano-table}. The database of these polytopes can be found online \cite{sgpdb}.

\begin{table}[tbp]
{\small
\begin{center}
\begin{tabular}{r|rrrrrrrrrrrrrr}
\begin{tikzpicture}[scale=0.5,trim right=0.77cm,baseline=2pt]
\draw[black,thin] (1.85,-0.1) -- (0.3,1);
\draw (0.3,0.4) node {\footnotesize $d$};
\draw (1.4,0.8) node {\footnotesize $r$};
\end{tikzpicture}&13&12&11&10&9&8&7&6&5&4&3&2&1&\\
\hline    
20&0&0&1&2&\textbf{5}&11&&&&&&&\\
19&0&0&0&2&3&7&&&&&&&&\\
18&0&0&0&1&2&\textbf{5}&&&&&&&&\\
17&0&0&0&0&2&3&7&&&&&&&\\
16&0&0&0&0&1&2&\textbf{5}&&&&&&&\\
15&0&0&0&0&0&2&3&&&&&&&\\
14&0&0&0&0&0&1&2&\textbf{5}&&&&&&\\
13&0&0&0&0&0&0&2&3&&&&&&\\
12&1&0&0&0&0&0&1&2&\textbf{6}&&&&&\\
11&&1&0&0&0&0&0&2&3&14&&&&\\
10&&&1&0&0&0&0&1&2&\textbf{6}&64&&&\\
9&&&&1&0&0&0&0&2&4&23&896&8229721&\\
8&&&&&1&0&0&0&1&2&\textbf{13}&258&749892&\\
7&&&&&&1&0&0&0&2&4&85&72256&\\
6&&&&&&&1&0&0&1&3&\textbf{28}&7622&\\
5&&&&&&&&1&0&0&2&12&866&\\
4&&&&&&&&&1&0&1&4&\textbf{124}&$\mathbf{n=3}$\\
3&&&&&&&&&&1&0&3&18&\\
2&&&&&&&&&&&1&1&5&\\
1&&&&&&&&&&&&1&1&\\
0&&&&&&&&&&&&&1&
\end{tabular}\medskip
\caption{Number of isomorphism classes of smooth Gorenstein polytopes of dimension $d$ and index $r$. \label{fano-table}}
\end{center}}
\end{table}

\smallskip

The reader might have noticed certain regularities in the table, if the index $r$ is large. For instance, there is only one smooth Gorenstein polytope (the unimodular simplex, see Section~\ref{sec-cay}) that satisfies $r > \lceil\frac{d+1}{2}\rceil$. These observations can be easily explained and follow from some well-known results in toric geometry. We refer to Theorem~\ref{main-theo} in Section~\ref{proofsec} for a description of all smooth Gorenstein polytopes of index $r > \frac{d+3}{3}$.  

\subsection{Motivation and background from a combinatorial viewpoint}

In this paper we report on what is known for smooth Gorenstein polytopes of either `small' dimension or `large' index. Our goal is to provide the community with a database of interesting examples. Let us describe three areas of relevance, where Gorenstein polytopes turn up. So far, all smooth Gorenstein polytopes we checked confirmed the below mentioned conjectures. 

\begin{itemize}
\item {\em Combinatorial commutative algebra and enumerative combinatorics:} 
Al\-ge\-bra\-i\-cal\-ly, being Gorenstein is equivalent to the fact that the semigroup algebra associated to the cone over a lattice polytope is a Gorenstein algebra, e.g., \cite{MS05}. Combinatorially, this translates into the symmetry of the so-called Ehrhart $h^*$-polynomial \cite{hibi}. The best-known example of a Gorenstein polytope is the Birkhoff polytope, the polytope of all doubly stochastic $n \times n$-matrices (it has dimension $n^2-2n+1$ and index $n$). Gorenstein polytopes and the notion of special simplices have gained increased interest in \cite{BrRo07, OH04, RW05, CHT06, Pay08, BN08, BBKSZ12}, pushed by the proof of Stanley's conjecture on the unimodality of the coefficients of the Ehrhart $h^*$-polynomial of the Birkhoff polytope \cite{Ath05}. It remains an open question, whether unimodality holds for any normal Gorenstein polytope \cite{MP05}, cf.~Definition~\ref{def-normal}.

\smallskip

\item {\em Ehrhart theory and Cayley polytopes:} The index of a Gorenstein polytope is a special case of the codegree of a lattice polytope, which is defined as the minimal positive integer by which you have to multiply a lattice polytope so that it contains an interior lattice point. Lattice polytopes of large codegree have been intensively studied over the last years, since the notion of the codegree is closely related to Ehrhart polynomials. Lattice polytopes with codegree $\ge d$ \cite{BN07} and Gorenstein polytopes of index $\ge d-1$ \cite{classdelpezzo} are completely known. The crucial notion appearing in these studies is that of a {\em Cayley polytope}, we refer to Section~\ref{sec-cay} for the precise definition. A Cayley polytope should be thought of as a lattice polytope which decomposes into smaller-dimensional lattice polytopes. It is conjectured (but still unproven) that a lattice polytope of codegree $\ge \frac{d+3}{2}$ is a Cayley polytope. For Gorenstein polytopes it is known to hold even for index $\ge \frac{d+2}{2}$ \cite{HNP09}. Because of their natural relation to toric manifolds also smooth lattice polytopes have been investigated more closely, and it is known that, if their codegree satisfies $\ge \frac{d+3}{2}$, then they are Cayley polytopes of a certain type \cite{DiR06,DDiRP09,DN10,araujo-new}. However, still basic questions on smooth polytopes are open \cite{gubeladze} such as whether any smooth lattice polytope is normal. 

\smallskip

\item {\em Combinatorial mirror symmetry:} Gorenstein polytopes were originally introduced in \cite{BB97} as an combinatorial framework incorporating the mirror symmetry phenomenon originally observed for Calabi-Yau manfolds. Gorenstein polytopes have a beautiful duality, and it is possible to define for any Gorenstein polytope a {\em stringy E-polynomial} \cite{stringcohomology,BN08,schepers} that has the same symmetry properties as the $E$-polynomial encoding the Hodge numbers of mirror-symmetric Calabi-Yau manifolds. Roughly speaking, these polynomials are sums over pairs of dual faces that sum up products of toric $g$-polynomials and Ehrhart $h^*$-poly\-no\-mi\-als. In other words, they encode combinatorial as well as algebraic information of the Gorenstein polytope and its dual simultaneously. Because of their complicated nature, it seems to be very hard to show statements that are obvious from an algebro-geometric viewpoint (cf.~Lemma~\ref{reduction}) by using only their explicit combinatorial definition. Open questions about these polynomial invariants can be found in \cite{BN08,schepers,skarke}.
\end{itemize}

\section{Introduction from the viewpoint of toric geometry}

\subsection{Toric Fano manifolds of large index}

A {\em Gorenstein Fano variety} $X$ is a $d$-dimensional projective complex variety such that its anticanonical divisor $-K_X$ is an ample Cartier divisor. The {\em index} $i_X$ of $X$ is defined as the largest positive integer $r$ such that there exists some Cartier divisor $D$ with $-K_X = r D$. Of special importance in algebraic geometry are {\em Fano manifolds}, i.e., nonsingular (Gorenstein) Fano varieties. They are completely known up to dimension $3$. In higher dimension, much work has been done to classify all Fano manifolds with large index. If $X$ is smooth, then $i_X \le d+1$ with equality only for $\P^d$. By now, Fano manifolds with $i_X \ge d-2$ or $i_X \ge \frac{d+1}{2}$ are completely known. We refer for these results to \cite{fanolargeindex} and Section~2 in \cite{classhighindex} with the references therein.

\medskip

In the toric situation more can be shown. We refer to \cite[Section 8.3]{toricbook}, and \cite{fanopolytopes} for surveys on toric Fano varieties. Isomorphism classes of toric Fano manifolds $X$ correspond bijectively to isomorphism classes of smooth reflexive polytopes $P$. Here, the index $i_X$ corresponds to the maximal $r$ for which $P$ is `divisible' by $r$ (as defined in Section~\ref{comb-intro}). In particular, any such $X$ is given by a smooth Gorenstein polytope of index $i_X$.  However, note that several smooth Gorenstein polytopes may define the same toric Fano variety (e.g., the reflexive polytope associated to $\P^3$ is divisible by $1$, $2$ and $4$). In other words, the reader should not confuse the index $i_X$ of $X$ with the index $r$ of a Gorenstein polytope. 

\smallskip

From the classification of smooth Gorenstein polytopes in Section~\ref{sec-cay} we can deduce the following observation. 

\begin{proposition}
\label{main-fano}
Let $X$ be a toric Fano $d$-fold. Then $i_X > \frac{d+3}{3}$ if and only if $X \cong \P^d$, or $X \cong \P^{\frac{d}{2}} \times \P^{\frac{d}{2}}$ 
(for $d \ge 4$ even), or 
\[X \cong \P_{\P^{d+1-r}}(O(a_1) \oplus \cdots \oplus O(a_t) \oplus O^{r-t}),\]
where $a_1, \ldots, a_t$ is some integer partition of $d+2-2r$ for $\frac{d+3}{3} < r \le \frac{d+1}{2}$. In the last case, $i_X = r$.
\end{proposition}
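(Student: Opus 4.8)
The plan is to translate the classification of smooth Gorenstein polytopes of index $r > \frac{d+3}{3}$ (Theorem~\ref{main-theo}) into toric language via the dictionary recalled above. Recall that a toric Fano $d$-fold $X$ corresponds to a smooth reflexive $d$-polytope $P$, that $i_X$ equals the largest integer by which $P$ is divisible, and that, after translating a vertex $v$ of $P$ to the origin, $Q := \frac{1}{i_X}(P-v)$ is a lattice polytope with the same (unimodular) normal fan as $P$; since $i_X Q$ is reflexive up to translation and the Gorenstein index is unique, $Q$ is a smooth Gorenstein $d$-polytope of index exactly $i_X$, and $X$ is recovered as the toric variety of the spanning fan of $i_X Q$. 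Conversely, every smooth Gorenstein $d$-polytope of index $r$ arises this way from a toric Fano $d$-fold $X$ with $r \mid i_X$, and $P$ is in particular divisible by $r$. Hence, for a given $X$, the inequality $i_X > \frac{d+3}{3}$ holds if and only if the polytope $Q$ attached to $X$ appears in Theorem~\ref{main-theo}, so the proposition is equivalent to identifying the toric variety of each polytope listed there.

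First I would recall that list: up to isomorphism, the smooth Gorenstein $d$-polytopes of index $r>\frac{d+3}{3}$ are the unimodular $d$-simplex (of index $d+1$), the product of two unimodular $\tfrac d2$-simplices (for even $d$, of index $\tfrac{d+2}{2}$), and the Cayley polytopes $\Cay(\ldots)$ built, as in Section~\ref{sec-cay}, over a unimodular $(d+1-r)$-simplex from $r$ point-summands of which $t$ are dilated by a partition $a_1,\dots,a_t$ of $d+2-2r$ and $r-t$ are trivial, for $\frac{d+3}{3}<r\le\frac{d+1}{2}$. Then, for each type, I would read off the associated toric variety from its spanning fan: the unimodular simplex gives $\P^d$; a product of lattice polytopes gives the product of the corresponding toric varieties, so the second type gives $\P^{d/2}\times\P^{d/2}$; and a Cayley polytope of the above shape is exactly an $r$-fold dilate (up to translation) of the anticanonical polytope of $\P_{\P^{d+1-r}}(\mathcal{O}(a_1)\oplus\cdots\oplus\mathcal{O}(a_t)\oplus\mathcal{O}^{r-t})$, by the standard presentation of the fan of a toric projective bundle as a Cayley cone. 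Matching the combinatorial data of the Cayley polytope (base dimension $d+1-r$, number $r$ of summands, dilation factors summing to $d+2-2r$) with the bundle data (base $\P^{d+1-r}$, rank $r$, twists), and checking that the resulting bundle is smooth and Fano, is the step I expect to require the most care.

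Finally, for the converse direction I would verify the indices directly. Here $i_{\P^d}=d+1$ and $i_{\P^{d/2}\times\P^{d/2}}=\gcd(\tfrac d2+1,\tfrac d2+1)=\tfrac{d+2}{2}$ are immediate, and for the bundle $X=\P_{\P^{d+1-r}}(\mathcal{O}(a_1)\oplus\cdots\oplus\mathcal{O}(a_t)\oplus\mathcal{O}^{r-t})$ one writes $-K_X$ as a combination of the relative tautological class and the pullback of the hyperplane class from $\P^{d+1-r}$ and observes that, once the twists are normalized to a partition, $\sum a_i = d+2-2r$ is precisely the condition making $-K_X$ divisible by $r$ and by no larger integer, so $i_X=r$; the upper bound $r\le\frac{d+1}{2}$ is forced by $\sum a_i = d+2-2r\ge 0$, the boundary case $r=\frac{d+2}{2}$ (even $d$) being the separately listed product $\P^{d/2}\times\P^{d/2}$. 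Collecting these cases, each listed $X$ satisfies $i_X>\frac{d+3}{3}$, and by Theorem~\ref{main-theo} they exhaust all toric Fano $d$-folds with this property, which proves both implications.
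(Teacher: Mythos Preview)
Your approach is essentially the paper's: pass from $X$ to a smooth Gorenstein polytope of index $i_X$, invoke Theorem~\ref{main-theo}, and translate back using the Cayley-polytope/projective-bundle dictionary (what the paper packages as Lemma~\ref{cayley-alggeo}). Your converse argument, computing $i_X$ directly for each listed variety, is more explicit than the paper's one-line appeal to Lemmas~\ref{cayley-comb} and~\ref{cayley-alggeo}, but amounts to the same thing.

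There are two small slips to fix. First, your recollection of Theorem~\ref{main-theo} omits case~(2), $P\cong 2S_d$ with $d\ge 5$ odd and index $(d+1)/2$. You must either include it and observe that its spanning fan is that of $(d+1)S_d$, hence again $X\cong\P^d$, or argue that it cannot occur for your $Q$: if $Q\cong 2S_d$ then $i_X=(d+1)/2$, yet the associated variety is $\P^d$ with $i_{\P^d}=d+1$, a contradiction. Either way the conclusion is unchanged, but as written the case analysis is incomplete. Second, your sentence ``a Cayley polytope of the above shape is exactly an $r$-fold dilate \ldots\ of the anticanonical polytope'' is backwards: the anticanonical (reflexive) polytope is the $r$-fold dilate of the Cayley polytope, not the other way around.

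One further remark on the claim $i_X=r$ in the bundle case: your sketch via $-K_X$ in $\mathrm{Pic}(X)$ is fine in principle, but the quickest combinatorial check is that, since $t<r$ in case~(4) (because $t\le n+1<r$ under $r>\frac{d+3}{3}$), the Cayley polytope has at least one undilated $S_{d+1-r}$ factor and hence an edge of lattice length~$1$; thus $rP$ has an edge of length exactly $r$, forcing $i_X\mid r$, while $r\mid i_X$ is clear. The paper leaves this implicit as well.
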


Here, an {\em integer partition} of $N$ is a multiset of nonnegative integers summing up to $N$. Let us remark that we do not claim that the previous result is especially original, in fact, it seems to be folklore to experts in toric geometry, even if we couldn't find a reference in the literature. 

\medskip

By now, toric Fano manifolds are known up to dimension $9$ by {\O}bro's algorithm \cite{oebro,sfpdim9} extending previous classifications \cite{wat-wat, bat-3d, class4folds, sato, kn09}. 
As an application of our algorithmic results (Table~\ref{fano-table}) combined with the previous proposition we can determine all non-isomorphic toric Fano manifolds with $i_X \ge d-7$.

\begin{cor}
\label{cor-fano}
Let $X$ be a toric Fano $d$-fold with index $i_X$.

\begin{enumerate}
\item Let $i_X = d$ (cf.~\cite{characterizations}). Then $X \cong \P^1 \times \P^1$.
\item {\em (Toric del Pezzo manifolds, cf.~\cite{classdeltagenus, smalldeltagenera, classdelpezzo})} Let $i_X=d-1$. Then
$$\begin{array}{c|c|c|c}
d & 2 & 3 & 4\\
\hline X & \#=3 & \P^1 \times \P^1 \times \P^1, \P_{\P^2}(O(1) \oplus O) & \P^2 \times \P^2
\end{array}
$$
\item {\em (Toric Mukai manifolds, cf.~\cite{biregularclass, fanovar})} Let $i_X=d-2$. Then
$$\begin{array}{c|c|c|c|c}
d & 3 & 4 & 5 & 6\\
\hline X & \#=15 & \P^1 \times \P^1 \times \P^1 \times \P^1, \P_{\P^3}(O(2) \oplus O),  & \P_{\P^3}(O(1) \oplus O^2) & \P^3 \times \P^3\\
 & & \P^1\times\P^3, \P^1\times\P_{\P^2}(O(1) \oplus O) & &
\end{array}
$$
\item Let $i_X=d-3$. Then 
$$\begin{array}{c|c|c|c|c|c}
d & 4 & 5 & 6 & 7 & 8\\
\hline X & \#=118 & \#=11 & \#=3 & \P_{\P^4}(O(1) \oplus O^3) & \P^4 \times \P^4
\end{array}
$$
\item Let $i_X=d-4$. Then 
$$\begin{array}{c|c|c|c|c|c|c}
d & 5 & 6 & 7 & 8 & 9 & 10\\
\hline X & \#=853 & \#=27 & \#=4 & \#=2 & \P_{\P^5}(O(1) \oplus O^4) & \P^5 \times \P^5
\end{array}
$$
\item Let $i_X=d-5$. Then 
$$\begin{array}{c|c|c|c|c|c|c|c}
d & 6 & 7 & 8 & 9 & 10 & 11 & 12\\
\hline 
\text{no.~of }X & 7590 & 83 & 12 & 4 & 2 & 1 & 1 
\end{array}
$$
\item Let $i_X=d-6$. Then 
$$\begin{array}{c|c|c|c|c|c|c|c|c}
d & 7 & 8 & 9 & 10 & 11 & 12 & 13 & 14\\
\hline 
\text{no.~of }X & 72167 & 256 & 23 & 6 & 3 & 2 & 1 & 1
\end{array}
$$
\item Let $i_X=d-7$. Then 
$$\begin{array}{c|c|c|c|c|c|c|c|c|c}
d & 8 & 9 & 10 & 11 & 12 & 13 & 14 & 15 & 16\\
\hline 
\text{no.~of }X & 749620 & 891 & 63 & 13 & 6 & 3 & 2 & 1 & 1
\end{array}
$$
\end{enumerate}
\end{cor}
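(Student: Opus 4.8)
The plan is to combine the standard dictionary between toric Fano manifolds and smooth reflexive polytopes with Proposition~\ref{main-fano} and the data of Table~\ref{fano-table}; the only real content is careful bookkeeping. Write $f_d(m)$ for the number of toric Fano $d$-folds with $i_X=m$, and $g_d(r)$ for the entry of Table~\ref{fano-table} in dimension $d$ and index $r$. A smooth reflexive $d$-polytope $\tilde P$ is divisible by $r$ exactly when $r\mid i_X$, and then $\tilde P/r$ (translated to a lattice polytope) is a smooth Gorenstein $d$-polytope of index exactly $r$; this bijection gives $g_d(r)=\sum_{r\mid m}f_d(m)$. Since $i_X\le d+1$ always, this is a finite sum, so M\"obius inversion over the divisibility poset yields
\[
f_d(d-k)\;=\;\sum_{j\ge1,\ j(d-k)\le d+1}\mu(j)\,g_d\bigl(j(d-k)\bigr).
\]
For $k\le7$ only a few values of $j$ contribute and the correction terms are small.

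For $d-k>\tfrac{d+3}{3}$, equivalently $d>\tfrac{3(k+1)}{2}$, I would read off the answer from Proposition~\ref{main-fano}. Among its three families, $\P^d$ has index $d+1\ne d-k$; the product $\P^{d/2}\times\P^{d/2}$ has index $\tfrac{d}{2}+1$, which equals $d-k$ exactly when $d=2k+2$, contributing one manifold; and a bundle $\P_{\P^{d+1-r}}(O(a_1)\oplus\cdots\oplus O(a_t)\oplus O^{r-t})$ has index $d-k$ exactly when $r=d-k$ and $a_1,\dots,a_t$ is a partition of $d+2-2r=2(k+1)-d$. In the range $\tfrac{3(k+1)}{2}<d\le2k+1$ one checks $2(k+1)-d<r$, so every such partition occurs with at most $r$ parts, the base $\P^{d+1-r}$ and fibre $\P^{r-1}$ have distinct dimensions (they coincide only at $d=2k+2$), and distinct partitions give non-isomorphic bundles; hence the count equals the partition number $p\bigl(2(k+1)-d\bigr)$ there, equals $1$ at $d=2k+2$, and is $0$ for $d\ge2k+3$. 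This disposes of all cases with $d\ge13$ and also recovers the remaining entries in dimensions $10$, $11$, $12$ with large index.

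It then remains to handle dimensions $d\le9$ and the four cases with $10\le d\le12$ but $d-k\le\tfrac{d+3}{3}$, namely $(d,d-k)\in\{(10,3),(10,4),(11,4),(12,5)\}$. For $d\le9$ I would invoke the complete classification of toric Fano manifolds (\cite{oebro,sfpdim9} and the references in the statement, which also underlie the $d\le9$ columns of Table~\ref{fano-table}) and simply count those with $i_X=d-k$. For the four remaining cases I would take the relevant values $g_d(\cdot)$ from Table~\ref{fano-table} and substitute into the M\"obius formula above; for instance $f_{11}(4)=g_{11}(4)-g_{11}(8)-g_{11}(12)$, where $g_{11}(12)=1$ (counting $\P^{11}$) is exactly the correction one must not drop.

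Finally I would verify that the three sources agree wherever they overlap --- Table~\ref{fano-table} against the $d\le9$ classification, and against Proposition~\ref{main-fano} in the range where both apply --- which also validates the table. I expect the only genuine difficulty to be organizational: keeping ``$P$ is Gorenstein of index $r$'' carefully distinct from ``$i_X=r$'' throughout the M\"obius bookkeeping, and correctly handling the degenerate boundary cases of Proposition~\ref{main-fano} (notably the coincidence $\P^{d/2}\times\P^{d/2}\cong\P_{\P^{d/2}}(O^{d/2})$ at $d=2k+2$, and the low-dimensional exceptions to that proposition) without double counting.
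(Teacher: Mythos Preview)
Your proposal is correct and follows essentially the same approach the paper takes: the paper gives no explicit proof of the corollary, merely stating it as a consequence of combining Table~\ref{fano-table} with Proposition~\ref{main-fano}, and you carry this out in detail. The one step you make explicit that the paper leaves to the reader is the M\"obius inversion $g_d(r)=\sum_{r\mid m}f_d(m)$ linking the table entries (smooth Gorenstein polytopes of index exactly $r$) to the counts $f_d(m)$ of Fano manifolds with $i_X=m$; this is indeed the right bookkeeping, and your handling of the boundary case $d=2k+2$ (where the bundle range $r\le\frac{d+1}{2}$ closes and only $\P^{d/2}\times\P^{d/2}$ survives) is accurate.
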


The database of the associated smooth Gorenstein polytopes can be found online~\cite{sgpdb}.

\subsection{Complete intersection Calabi-Yau manifolds}

Over the last three decades mirror symmetry has spurred the interest in finding all possible Hodge numbers of Calabi-Yau $n$-folds, in particular, for $n=3$. The `Hodge diamond' of a Calabi-Yau threefold is completely described by the pair of Hodge numbers $(h^{1,1},h^{1,2})$. So far ten thousands of these pairs have been found \cite{jurke,k3fibrations}, all of them in the range of $h^{1,1} + h^{1,2} \le 502$. This can be regarded as striking evidence for an affirmative answer to the following question attributed to Yau: are there only finitely many families of irreducible Calabi-Yau $n$-folds? For related results in this directions we refer to \cite{allhodgenumbers, expandingzoo, quasismooth, quasismoothfiniteness, skarke}.

\medskip

The vast amount of examples of Calabi-Yau manifolds are so-called {\em CICY's}: (resolutions of) generic complete intersection Calabi-Yau varieties in Gorenstein toric Fano varieties. Here, reflexive polytopes play a key role and were introduced for this purpose by Batyrev \cite{Bat94}. To avoid confusion we adopt the notation in \cite{Bat94}. We will omit technical details, the interested reader is invited to look at the survey paper \cite{BN08}. 

\smallskip

An $s$-dimensional reflexive polytope $\Delta$ defines a Gorenstein toric Fano variety $X$ given by the fan over the faces of $\Delta^*$. A generic anticanonical hypersurface $Y$ in $X$ is a (possibly singular) Calabi-Yau variety of dimension $s-1$. For $s\le 4$ it can be (crepantly) resolved by a Calabi-Yau manifold $\widehat{Y}$. Exploiting the duality of reflexive polytopes, Batyrev showed that for $s=4$ the Calabi-Yau $3$-folds $\widehat{Y},\widehat{Y^*}$ constructed by $\Delta$ and $\Delta^*$ in this way have mirror-symmetric Hodge numbers: $h^{1,1}(\widehat{Y})=h^{1,2}(\widehat{Y^*})$. For $s > 4$, the possibly singular $Y$ may not be resolvable in this way. Therefore, one considers {\em stringy Hodge numbers} $\hstpq(Y)$, see \cite{dais, BB96}. In the case that $Y$ can be crepantly resolved by a Calabi-Yau manifold $\widehat{Y}$, the stringy Hodge numbers of $Y$ equal the usual Hodge numbers of $\widehat{Y}$.

\medskip

Batyrev and Borisov generalized Batyrev's results to complete intersections \cite{Bor93,BB96,BB97}. A generic CICY $Y$ of dimension $s-r$ in an $s$-dimensional Gorenstein toric Fano variety $X$ is given by a Minkowski decomposition of the reflexive polytope $\Delta = \Delta_1 + \cdots + \Delta_r$. Using such a datum, Batyrev and Borisov showed (in the notation of \cite{BN08}) that the {\em stringy $E$-polynomial} of $Y$ 
\begin{equation}
\label{poly}
\Est(Y) := \sum_{p,q} (-1)^{p+q} \, \hstpq(Y) \, u^p v^q
\end{equation}
equals $\Est(P)$, a rather complicated combinatorial expression called the {\em stringy $E$-poly\-no\-mi\-al} of an associated Gorenstein polytope $P$ of of index $r$ and dimension $d := s+r-1$. We refer to Section~\ref{CICY} for details of the Batyrev-Borisov construction.

\smallskip

It is important to remark that not every Gorenstein polytope is given by such a Minkowski decomposition, still, $\Est(P)$ is always of the form 
(\ref{poly}), so stringy Hodge numbers of Gorenstein polytopes are well-defined, see \cite{schepers}. Moreover, we define the {\em Calabi-Yau dimension} of {\em any} Gorenstein polytope $P$ of dimension $d$ and index $r$ as $n:=d+1-2r$. Gorenstein polytopes also satisfy a beautiful duality, and under additional hypotheses (the existence of a so-called {\em nef-partition}) it is possible to show that a CICY has an analogously constructed mirror partner (on the level of stringy Hodge numbers), for more 
on this see \cite{Bor93,oncicytoric,BB96,BB97,BN08}.

\smallskip

After these preparations, we can state the result which originally motivated our investigations. (We remark that $\Est(P) = 0$, if $n < 0$, see \cite{schepers}.)

\begin{theorem}
\label{main-hodge}
Let $P$ be a smooth Gorenstein polytope of Calabi-Yau dimension $n \ge 0$ and of dimension $d > 3 n+3$. 
Then $\Est(P)=\Est(P')$ for $P'$ a smooth Gorenstein polytope of Calabi-Yau dimension $n$ and of dimension at most $3n+1$. 
More precisely, $\Est(P)$ equals the $E$-polynomial of an $n$-dimensional Calabi-Yau manifold $Y$ given as the complete intersection in projective space $\P^{\tilde{s}}$ of generic hypersurfaces of degrees $d_1, \ldots, d_{\tilde{r}} \in \Z_{\ge 2}$ with $d_1+\ldots+d_{\tilde{r}}=\tilde{s}+1$, where $\tilde{s} \le 2n+1$. Any $n$-dimensional generic Calabi-Yau complete intersection associated to the Gorenstein polytope $P$ (in the sense of Batyrev-Borisov) is isomorphic to $Y$.
\end{theorem}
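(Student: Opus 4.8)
The plan is to deduce the statement from the explicit classification of smooth Gorenstein polytopes of large index (Theorem~\ref{main-theo}) together with one algebro-geometric reduction (Lemma~\ref{reduction}) and some elementary bookkeeping. Write $r$ for the index of $P$, so that its Calabi-Yau dimension is $n=d+1-2r$; then the hypothesis $d>3n+3$ is exactly $r>\frac{d+3}{3}$, and $n\ge 0$ is $r\le\frac{d+1}{2}$, so $P$ lies squarely in the range treated by Theorem~\ref{main-theo}. Inspecting that classification (equivalently: the toric Fano $d$-fold $X_{rP}$ with normal fan that of $P$ has $-K_{X_{rP}}$ divisible by $r$, hence $i_{X_{rP}}\ge r>\frac{d+3}{3}$, and is one of the varieties listed in Proposition~\ref{main-fano}, of which the product $\P^{d/2}\times\P^{d/2}$ only contributes Calabi-Yau dimension $-1$ and is therefore excluded), one sees via the Batyrev-Borisov dictionary between these Cayley polytopes of unimodular simplices and complete intersections in projective space that $P$ is the Gorenstein polytope attached to the generic complete intersection $Y_0$ of hypersurfaces of degrees $d_1,\dots,d_r\ge 1$ in $\P^s$, where $s=d+1-r$ and $d_1+\dots+d_r=s+1$; here the numbers $d_i-1$ form the partition of $n+1$ appearing in Proposition~\ref{main-fano} (padded with zeros for the linear factors).

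Now I would apply Lemma~\ref{reduction}. Whenever some $d_i=1$ the corresponding equation is a generic hyperplane; intersecting with it identifies $Y_0$ with the generic complete intersection of the remaining degrees in $\P^{s-1}$, which on the polytope side replaces $P$ by a smaller smooth Gorenstein polytope $P'$ with $(s,r,d)\rightsquigarrow(s-1,r-1,d-2)$, the Calabi-Yau dimension $n$ unchanged, and $\Est(P)=\Est(P')$. Iterating until every degree is $\ge 2$ produces a smooth Gorenstein polytope $P'$ attached to a generic complete intersection $Y$ of degrees $d_1,\dots,d_{\tilde r}\ge 2$ in $\P^{\tilde s}$ with $\sum d_i=\tilde s+1$ and $\dim Y=\tilde s-\tilde r=n$. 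From $\sum(d_i-1)=n+1$ with each $d_i-1\ge 1$ we obtain $\tilde r\le n+1$, hence $\tilde s=n+\tilde r\le 2n+1$ and $\dim P'=\tilde s+\tilde r-1=n+2\tilde r-1\le 3n+1$, as asserted. By Bertini $Y$ is a smooth projective variety, and $K_Y=0$ because $\sum d_i=\tilde s+1$, so $Y$ is a Calabi-Yau $n$-fold; being smooth, its stringy $E$-polynomial $\Est(Y)$ is just its ordinary one, and combined with the Batyrev-Borisov identity $\Est(P')=\Est(Y)$ this gives $\Est(P)=\Est(P')=\Est(Y)$, the $E$-polynomial of $Y$. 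Finally, since the classification shows that any nef-partition realizing our smooth $P$ lives inside the reflexive simplex, every Batyrev-Borisov complete intersection attached to $P$ is a generic complete intersection of degrees $d_1,\dots,d_r$ in $\P^s$, and deleting the linear factors (a chain of generic hyperplane sections) yields a variety isomorphic to $Y$.

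The serious input, and what I expect to be the main obstacle, is Theorem~\ref{main-theo} itself: one must prove that smoothness together with $r>\frac{d+3}{3}$ pins $P$ down to the short list of Cayley polytopes of unimodular simplices, and then translate that combinatorial description into the complete-intersection datum $(d_1,\dots,d_r;\P^s)$ used above. Everything afterwards is bookkeeping plus the content of Lemma~\ref{reduction}, which (as the authors stress) is immediate on the algebro-geometric side---erasing a degree-one equation does not change the variety---yet awkward to see directly from the combinatorial definition of the stringy $E$-polynomial.
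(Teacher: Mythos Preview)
Your overall strategy---classification via Theorem~\ref{main-theo}, then repeated application of Lemma~\ref{reduction}---is exactly the paper's, and the bookkeeping for case~(4) is correct. But there is a genuine gap: you pass from the polytope $P$ to the toric Fano variety $X_{rP}$ via Proposition~\ref{main-fano} and then implicitly back to $P$, treating this as a bijection. It is not. When $X_{rP}\cong\P^d$ the index of the \emph{variety} is $d+1$, but the Gorenstein polytope $P$ has index $r$, and with $n\ge 0$ and $r>\frac{d+3}{3}$ the possibility $r=\frac{d+1}{2}$, $P\cong 2S_d$ (odd $d\ge 5$, $n=0$) occurs---case~(2) of Theorem~\ref{main-theo}. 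This $P$ is \emph{not} a Cayley polytope of $r$ factors (every edge at any vertex of $2S_d$ has an interior lattice point, so any lattice projection to $S_{r-1}$ would have to be constant), so your sentence ``$P$ is the Gorenstein polytope attached to the generic complete intersection $Y_0$ of hypersurfaces of degrees $d_1,\dots,d_r\ge 1$ in $\P^s$'' fails here. The paper handles this case separately in one line via $\Est(2S_d)=2=\Est([-1,1])$, realized by a quadric in $\P^1$; you should do the same. You would be safer working directly from the four cases of Theorem~\ref{main-theo} rather than detouring through Proposition~\ref{main-fano}.

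A second, smaller gap is in the last statement. You write ``since the classification shows that any nef-partition realizing our smooth $P$ lives inside the reflexive simplex, every Batyrev-Borisov complete intersection attached to $P$ is a generic complete intersection of degrees $d_1,\dots,d_r$ in $\P^s$.'' But Theorem~\ref{main-theo} only exhibits \emph{one} Cayley structure on $P$; it does not by itself show that every lattice surjection $\phi\colon P\to S_{r-1}$ recovers the same factors $(a_i+1)S_s$ up to permutation. The paper supplies this missing step: since the factor $(a_1+1)S_s\times e_1$ with $a_1>0$ has an interior lattice point on every edge through each vertex, its image under any such $\phi$ must be a single vertex of $S_{r-1}$; hence the $\R^s$-direction lies in the fibre, and the two Cayley decompositions coincide up to permutation. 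Without this argument your conclusion about the degrees $d_1,\dots,d_r$ being forced is not justified.
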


Since there are only finitely many Gorenstein polytopes in fixed dimension, this answers affirmatively for the special class of {\em smooth} Gorenstein polytopes Question 4.21 in \cite{BN08} asking whether there should be (up to multiples) only finitely many stringy $E$-polynomials of Gorenstein polytopes of given Calabi-Yau dimension. In particular, the validity of this conjecture for any Gorenstein polytope would imply the finiteness of stringy Hodge numbers of all irreducible CICY's given by the Batyrev-Borisov construction. In our present situation the previous theorem shows even more:

\begin{cor}
\label{cor-hodge}
There are only finitely many families of (possibly singular) generic Calabi-Yau complete intersections of dimension $n$ that are associated to {\em smooth} Gorenstein polytopes.
\end{cor}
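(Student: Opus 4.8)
I would argue as follows, deducing the statement from Theorem~\ref{main-hodge} together with two soft finiteness facts. Any family under consideration is, by definition, associated via the Batyrev--Borisov construction to some smooth Gorenstein polytope $P$ of Calabi--Yau dimension $n$; let $d$ denote its dimension and $r$ its index, so that $n = d+1-2r$. The plan is to split according to whether $d > 3n+3$ or $d \le 3n+3$ and to bound the number of families separately in each range.

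Suppose first $d > 3n+3$. By Theorem~\ref{main-hodge}, \emph{any} $n$-dimensional generic Calabi--Yau complete intersection associated to such a $P$ is isomorphic to a complete intersection $Y \subset \P^{\tilde s}$ of generic hypersurfaces of degrees $d_1,\ldots,d_{\tilde r} \in \Z_{\ge 2}$ with $d_1+\cdots+d_{\tilde r} = \tilde s+1$ and $\tilde s \le 2n+1$. The data $(\tilde s; d_1,\ldots,d_{\tilde r})$ range over a finite set: there are at most $2n+1$ admissible values of $\tilde s$, and for each of them only finitely many multisets of integers $\ge 2$ summing to $\tilde s+1$. Hence all families arising from polytopes with $d > 3n+3$ lie among finitely many families of complete intersections in projective spaces.

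Now suppose $d \le 3n+3$. In each fixed dimension there exist only finitely many reflexive polytopes up to isomorphism, and a Gorenstein polytope of index $r$ and dimension $d$ arises from a reflexive polytope of dimension $d$ that is divisible by $r$ (by translating one of its vertices to the origin and scaling by $1/r$); consequently there are, up to isomorphism, only finitely many smooth Gorenstein polytopes of dimension $\le 3n+3$. Moreover, a single reflexive polytope $\Delta = rP$ admits only finitely many Minkowski decompositions $\Delta = \Delta_1 + \cdots + \Delta_r$ into lattice polytopes up to translation of the summands, since each $\Delta_i$ is, after translation, a lattice polytope contained in $\Delta$, and $\Delta$ contains only finitely many lattice points and hence only finitely many such subpolytopes. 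Thus each smooth Gorenstein polytope gives rise to only finitely many families of Calabi--Yau complete intersections, and the polytopes with $d \le 3n+3$ also account for only finitely many families.

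Combining the two cases, the total number of families of dimension $n$ associated to smooth Gorenstein polytopes is at most the sum of the two finite bounds above, which proves the corollary. The entire weight of the argument is carried by Theorem~\ref{main-hodge}, which is precisely what renders the infinitely many high-dimensional smooth Gorenstein polytopes harmless; the only point needing a little care---and the one I expect to be the main (though still routine) obstacle---is the bookkeeping that a single smooth Gorenstein polytope yields only finitely many distinct families, i.e.\ that such a family is pinned down by a finite combinatorial datum (a Minkowski decomposition / nef-partition of the associated reflexive polytope).
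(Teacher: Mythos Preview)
Your argument is correct and follows the same line the paper intends: the paper states Corollary~\ref{cor-hodge} as a direct consequence of Theorem~\ref{main-hodge} without giving a separate proof, and your case split ($d>3n+3$ via Theorem~\ref{main-hodge}, $d\le 3n+3$ via finiteness of reflexive polytopes in bounded dimension together with finiteness of Minkowski decompositions) is exactly the routine unpacking of that deduction. One tiny cosmetic point: in your second case, the translated summand $\Delta_i$ is contained in a \emph{translate} of $\Delta$ (namely $\Delta-v$ for $v$ the vertex $\sum v_i$), not necessarily in $\Delta$ itself, but this changes nothing since the translate has the same lattice points.
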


Table~\ref{fano-table} gives a complete list of all smooth Gorenstein polytopes of Calabi-Yau dimension $n \le 3$ up to large $d$. Note that by Theorem~\ref{main-hodge} it is enough to consider $d \le 12$ for $n=3$. Of course, the assumption of smoothness is very strong and yields only very few Gorenstein polytopes. The subtle issue which of the stringy Hodge numbers of these Gorenstein polytopes are realized by CICY's is addressed in Section~\ref{realization}. In particular, we find $7$ Hodge numbers of Calabi-Yau $3$-folds that are not yet contained in the database \cite{jurke} by Benjamin Jurke.

\begin{remark}Let us remark that in \cite{skarke} the Kreuzer-Skarke algorithm was extended to a potential classification procedure of Gorenstein polytopes of given $d$ and $n$, however, it is unclear yet how computationally feasible this will be. Lists of Gorenstein polytopes with $n=3$ coming from so-called basic IP weight systems can be found on the webpage~\cite{skarke-database}.
\end{remark}

\bigskip

The paper is organized as follows: Section 2 gives a combinatorial proof of the classification of smooth Gorenstein polytopes with large index (Theorem~\ref{main-theo}). Section 3 deduces Proposition~\ref{main-fano} and Theorem~\ref{main-hodge}. Section 4 explains the classification algorithm. Finally, Section 5 discusses the realization of stringy Hodge numbers by CICY's. 

\medskip

\textbf{Acknowledgments:} The second author is supported by the US National Science Foundation (DMS 1203162) and would also like to thank Christian Haase and the research group Lattice Polytopes at the Goethe-Universit\"at Frankfurt for hospitality and fruitful discussions. We thank Cinzia Casagrande for references and for putting our results into their proper perspective. 

\section{Smooth Gorenstein polytopes of large index}

\subsection{Cayley polytopes}

\label{sec-cay}
For convenience, we will denote the lattice by $\Z^d$, as long as it is not important to differentiate between $M$ or $N$.\smallskip

For $d$ a positive integer we define the {\em unimodular $d$-simplex} as 
\[S_d := \conv(0,e_1,\ldots,e_d),\]
where $e_1, \ldots, e_d$ is a lattice basis.

\begin{definition}
\label{def-cay}
Let $P_0, \ldots, P_k \subset \R^s$ be lattice polytopes. 
Then we define the {\em Cayley polytope} of $P_0, \ldots, P_k$ as
\[P_0 * \cdots * P_k := \conv(P_0 \times e_0, \ldots, P_k \times e_k) \subseteq \R^s \oplus \R^{k+1},\]
where $e_0, \ldots, e_k$ is a lattice basis of $\R^{k+1}$. Here, $P_0, \ldots, P_k$ are called {\em Cayley factors}. Note that if the dimension of the affine span $\aff(P_0, \ldots, P_k)$ equals $s$, 
then the Cayley polytope is a lattice polytope of dimension $s+k$.
\end{definition}

Now, we can formulate the following result.

\begin{theorem}
\label{main-theo}
Let $P$ be a smooth Gorenstein polytope of dimension $d$ and index $r$. Then $r > \frac{d+3}{3}$ if and only if 
\begin{enumerate}
\item $P \cong S_d$ (here, $r=d+1$)
\item $P \cong 2 S_d$ with $d \ge 5$ odd (here, $r=\frac{d+1}{2}$)
\item $P \cong S_{\frac{d}{2}} \times S_{\frac{d}{2}}$ with $d \ge 4$ even (here, $r=\frac{d+2}{2}$) 
\item $P \cong (a_1+1) S_{d+1-r} * \cdots * (a_t+1) S_{d+1-r} * S_{d+1-r} * \cdots * S_{d+1-r},$
where there are $\frac{d+3}{3} < r \le \frac{d+1}{2}$ Cayley factors, and $a_1, \ldots, a_t$ is an integer partition of $d+2-2r$.
\end{enumerate}
Different integer partitions in (4) yield non-isomorphic Gorenstein polytopes.
\end{theorem}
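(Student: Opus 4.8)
The plan is to recover the integer partition from a single unimodular invariant of the polytope in case (4), namely the multiset of lattice lengths of its edges. Since two unimodularly equivalent polytopes have the same dimension $d$ and the same index $r$, it suffices to show that, for fixed $d$ and $r$ with $\frac{d+3}{3}<r\le\frac{d+1}{2}$, distinct partitions of $d+2-2r$ (into positive parts; zero parts change neither the partition in the relevant sense nor the polytope) give polytopes with different edge-length multisets.

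First I would record the face structure of $P=(a_1+1)S_m*\cdots*(a_t+1)S_m*S_m*\cdots*S_m$, where $m:=d+1-r$ and there are $r$ Cayley factors in total. Write $A_1,\dots,A_r$ for these factors, sitting over the vertices $e_1,\dots,e_r$ of the standard simplex in the $\R^r$ Cayley block, so $A_i\cong(a_i+1)S_m$ for $i\le t$ and $A_i\cong S_m$ otherwise. Each $A_i$ is a face of $P$: the linear functional given by the $i$-th coordinate of the $\R^r$ block is identically $1$ on $A_i$ and identically $0$ on every other $A_j$, hence attains its maximum over $P=\conv(A_1\cup\cdots\cup A_r)$ exactly along $A_i$. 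As the $A_i$ are pairwise disjoint (they have distinct values in the $\R^r$ block) and are faces, every vertex of $P$ is a vertex of exactly one $A_i$; consequently every edge of $P$ is either an edge of some single $A_i$ or joins a vertex of $A_i$ to a vertex of $A_j$ for some $i\ne j$.

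Next I would compute lattice lengths in these two cases. An edge contained in $A_i\cong(a_i+1)S_m$ is the $(a_i+1)$-fold dilate of a primitive segment, hence has lattice length $a_i+1$, and $A_i$ contributes exactly $\binom{m+1}{2}$ such edges. An edge joining $v\in A_i$ to $w\in A_j$ with $i\ne j$ is primitive: the $\R^r$-component of $v-w$ is $e_i-e_j$, which has a coordinate equal to $1$, so (as $v,w$ are lattice points of the lattice polytope $P$) the vector $v-w$ is primitive and the edge has lattice length $1$. Therefore the multiset of lattice edge-lengths of $P$ consists of the value $a_i+1$ with multiplicity $\binom{m+1}{2}$ for each $i=1,\dots,t$, together with some number of copies of $1$ (coming from the trailing $S_m$ factors, from any $a_i=0$, and from all edges between distinct Cayley factors).

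To conclude, observe that $m=d+1-r\ge\frac{d+1}{2}\ge 1$ since $r\le\frac{d+1}{2}$, so $\binom{m+1}{2}\ge 1$, and that $m$ (hence $\binom{m+1}{2}$) is determined by the isomorphism class. The multiset of lattice edge-lengths is a unimodular invariant, and by the previous paragraph, for each integer $c\ge 2$ the number of edges of $P$ of lattice length $c$ equals $\binom{m+1}{2}$ times the number of parts of the partition equal to $c-1$. Dividing by the nonzero integer $\binom{m+1}{2}$ recovers the partition, so two polytopes of the form (4) with different partitions cannot be unimodularly equivalent. There is no real obstacle in this argument; the only two points that require a moment's care are verifying that each Cayley factor is genuinely a face of $P$ (so that its edges are edges of $P$, with the expected lattice lengths) and the observation that any edge running between two distinct Cayley factors is automatically primitive because of its $e_i-e_j$ component.
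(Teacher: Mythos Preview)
Your proposal addresses only the last sentence of the theorem (that different integer partitions in (4) yield non-isomorphic Gorenstein polytopes) and says nothing about the main content: the ``if and only if'' classification. The paper establishes that classification in several steps. First, Casagrande's bound on the number of vertices of a simplicial reflexive polytope, combined with the inequality $\delta_{(rP)^*}\ge r-1$, shows that $r>\frac{d+3}{3}$ forces $(rP)^*$ to be a simplex or to have exactly $d+2$ vertices. The simplex case is then handled directly (yielding (1) and (2)), while the $d{+}2$-vertex case invokes Kleinschmidt's classification of smooth complete toric varieties of Picard number~$2$ and an explicit computation of the vertices of $rP$ to pin down the Cayley structure in (3) and (4). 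The converse for (4) is checked via a separate lemma. None of this appears in your argument, so as a proof of the full theorem it is incomplete.

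For the part you do treat, your approach is correct and genuinely different from the paper's. You recover the partition from the multiset of lattice edge-lengths of $P$ itself, using that each Cayley factor $(a_i{+}1)S_m$ is a face contributing $\binom{m+1}{2}$ edges of length $a_i{+}1$, while any edge between distinct factors is primitive because its $\R^r$-component is $e_i-e_j$. The paper instead works on the dual side: $(rP)^*$ has $d+2$ vertices, among which there is (up to scaling) a unique affine relation, and the coefficient multiset of this relation is a unimodular invariant that encodes the $a_i$'s. Your invariant is more elementary and self-contained, requiring no passage to the dual; the paper's argument, on the other hand, is a one-line byproduct of the Kleinschmidt description that is already in hand at that stage of the proof.
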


\label{proofsec}

While strictly speaking not necessary, we will often express bounds in terms of the Calabi-Yau dimension $n:=d+1-2r$, since this is closer to the point of view of our paper. Note that 
\[r > \frac{d+3}{3} \quad\lolra\quad d > 3n+3\,.\]

We will give a combinatorial proof of Theorem~\ref{main-theo} in the remainder of this section.

\begin{remark}{\rm For the algebro-geometric reader this is how we will proceed. It follows from Mukai's conjecture \cite{casagrande,mukai-conj} that large index implies Picard number $\rho \le 2$. In the toric situation Kleinschmidt \cite{kleinschmidt} showed that these Fano manifolds have to be projective toric bundles. It remains to use the assumption on the index to specify them and their associated reflexive polytopes. This can be done using numerical criteria. The proof presented here follows along these lines. It is slightly longer, but more elementary and combinatorial.}
\end{remark}

\subsection{Bounding the number of vertices}

We denote by $\V$ the vertex set of a polytope. Let us consider a $d$-dimensional {\em simplicial} reflexive polytope $Q \subset \NR$ (i.e., 
dual to a simple reflexive polytope). Casagrande defined in \cite{casagrande} the number
\[\delta_Q := \min \{\pro{v}{u} \;:\; v \in \V(Q),\, u \in \V(Q^*), v \notin F_u\} \in \Z_{\ge 0},\]
where $F_u$ defines the facet of the dual polytope $Q^*$ corresponding to the vertex $u$ of $Q$. Here $\pro{\cdot}{\cdot}$ denotes the inner pairing of $N$ and $M$. 

Casagrande showed the following result (Theorem 3({\rm ii}) in \cite{casagrande}). For readers with a background in algebraic geometry, this proved the validity of Mukai's conjecture $\rho_X(i_X-1) \le d$ (where $\rho_X$ is the Picard number) for $\Q$-factorial Gorenstein toric Fano varieties $X$.

\begin{theorem}[Casagrande '06]
\label{casa}
If $\delta_Q > 0$, then
\[|\V(Q)| \le d + \frac{d}{\delta_Q}\,.\]
\end{theorem}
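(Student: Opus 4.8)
The plan is to bound $|\V(Q)|$ by a two-step counting argument over the facets of $Q^*$, exploiting the hypothesis $\delta_Q > 0$ together with the reflexivity of $Q$. Recall that because $Q^*$ is simple (dual to the simplicial $Q$) with vertices $u$ corresponding bijectively to facets $F_u$ of $Q^*$, equivalently to facets of $Q$. Fix a vertex $u \in \V(Q)$ and consider the facet $F_u \subset Q^*$. The vertices $v$ of $Q$ split into those lying on $F_u$ (i.e.\ with $\pro{v}{u} = -1$, since reflexivity forces every facet of $Q^*$ to be cut out by a primitive functional at lattice distance $1$ from the origin, and $u$ is that functional) and those not on $F_u$. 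For the latter, the definition of $\delta_Q$ gives $\pro{v}{u} \ge \delta_Q > 0$.

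Next I would estimate both parts. The vertices of $Q$ lying on $F_u$ span a facet of $Q$; since $Q$ is simplicial of dimension $d$, this facet has exactly $d$ vertices. For the vertices $v \notin F_u$, I would use the identity $\sum_{v \in \V(Q)} \pro{v}{u} \cdot (\text{multiplicity or weight of } v)$—more precisely, the lattice-geometric fact that the centroid-type relation or a summation over a triangulation forces $\sum_{v} \pro{v}{u}$ to be controlled. The cleanest route: for each facet $G$ of $Q$ not containing $u$, pair it against $u$; reflexivity gives $\pro{v}{u_G} = -1$ on $G$'s own supporting vertex $u_G$, and a global double-counting of incidences between vertices of $Q$ off $F_u$ and the functional $u$ yields $\delta_Q \cdot (|\V(Q)| - d) \le \sum_{v \notin F_u}\pro{v}{u} \le d$, the last inequality coming from the reflexive identity $\sum_{v \in \V(Q)} \pro{v}{u} \le d$ valid because $\pro{v}{u}\ge -1$ for all $v$ and equals $-1$ for the $d$ vertices on $F_u$, while the whole sum is at most $0$ (or a similarly small quantity) by a standard argument using that $Q$ is reflexive and $u$ is primitive. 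Rearranging gives exactly $|\V(Q)| \le d + d/\delta_Q$.

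The main obstacle is pinning down the correct global inequality $\sum_{v \in \V(Q)} \pro{v}{u} \le 0$ (or the right constant on the right-hand side). This is where reflexivity and simpliciality must be combined carefully: one wants to say that the sum of a primitive facet-functional over all vertices is nonpositive, which follows because the origin is interior and because the $d$ vertices on the opposite-type facets contribute $-1$ each while the remaining contributions cannot overwhelm them—this is essentially Casagrande's lemma and I would cite her argument rather than redo it. With that inequality in hand the rest is the elementary rearrangement above, so the proof is short modulo that one structural input.
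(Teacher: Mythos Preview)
The paper does not prove this theorem; it simply quotes it from Casagrande's paper \cite{casagrande} (Theorem~3(ii) there). So there is no ``paper's own proof'' to compare against beyond the citation.

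Your sketch is heading in the direction of Casagrande's actual argument, but it has a real gap precisely where you say it does. The structure you outline is correct: for a well-chosen $u\in\V(Q^*)$, the $d$ vertices of $Q$ on the facet $F_u$ contribute $-1$ each to $\sum_{v\in\V(Q)}\pro{v}{u}$, while the remaining $|\V(Q)|-d$ vertices contribute at least $\delta_Q$ each; so if $\sum_{v}\pro{v}{u}\le 0$ then $\delta_Q(|\V(Q)|-d)\le d$ and you are done. The issue is that you never establish $\sum_{v}\pro{v}{u}\le 0$, and your proposed fix---``cite Casagrande's lemma''---is circular, since this inequality together with the elementary rearrangement \emph{is} the content of her proof. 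It does not hold for an arbitrary $u$; you must choose $u$ so that $F_u$ is a \emph{special facet} in the sense of Definition~5.1 of the present paper: since $0$ lies in the interior of $Q$, the face fan of $Q$ covers $N_\R$, so $s:=\sum_{v\in\V(Q)} v$ lies in $\cone(F_u)$ for some facet $F_u$, and then $\pro{s}{u}\le 0$ because $\pro{\cdot}{u}\equiv -1$ on $F_u$. Once you insert this choice of $u$, your outline becomes a complete proof.

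A secondary point: your notation drifts. You write ``Fix a vertex $u\in\V(Q)$'' and ``$F_u\subset Q^*$'', but in the paper's setup $u\in\V(Q^*)$ and $F_u$ is the corresponding facet of $Q$; the pairing $\pro{v}{u}$ only makes sense with $v$ and $u$ in dual lattices. This should be cleaned up before the argument can be read correctly.
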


The following observation is also well-known (Lemma 2 in \cite{casagrande}):

\begin{lemma}
\label{inequality} 
Let $P$ be a simple Gorenstein polytope of index $r$. Then $(rP)^*$ is a simplicial reflexive polytope with
\[\delta_{(r P)^*} \ge r-1\,.\]
\end{lemma}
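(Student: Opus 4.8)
The plan is to turn the estimate $\delta_{(rP)^*}\ge r-1$ into an elementary congruence modulo $r$. Write $R$ for the reflexive polytope in the lattice-translation class of $rP$, so that $(rP)^*$ is shorthand for $R^*$; by the dictionary recalled in Section~\ref{comb-intro}, $R$ is a reflexive polytope divisible by $r$, a fact I will use in the form: \emph{any two vertices of $R$ are congruent modulo $rM$} (if $v_0\in\V(R)$ then $(R-v_0)/r$ is a lattice polytope, so $v-v_0\in rM$ for every $v\in\V(R)$). Since $P$ is simple and neither dilation nor lattice translation changes the normal fan, $R$ is a simple reflexive polytope; hence $R^*=(rP)^*$ is a simplicial reflexive polytope. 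This is the first assertion, and it is what allows Casagrande's invariant $\delta$ to be applied to $R^*$.

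The next step is to rewrite $\delta_{R^*}$ concretely. Under the face correspondence for the dual pair $R,R^*$, a vertex $v$ of $R^*$ corresponds to a facet $G_v=\{x\in R:\pro{v}{x}=-1\}$ of $R$, while the facet $F_u$ of $R^*$ attached to a vertex $u$ of $R$ satisfies $v\in F_u\Leftrightarrow u\in G_v$. Unwinding Casagrande's definition (applied with $R^*$ in the role of the simplicial polytope) therefore gives
\[
\delta_{R^*}=\min\bigl\{\,\pro{v}{u}\;:\;v\in\V(R^*),\ u\in\V(R),\ u\notin G_v\,\bigr\}\,.
\]
For such a pair we have $\pro{v}{u}\ge-1$ because $u\in R$ and $v\in R^*$, and $\pro{v}{u}\ne-1$ because $u\notin G_v$; since both polytopes are lattice polytopes this forces $\pro{v}{u}\ge 0$.

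It remains to prove the key congruence $\pro{v}{u}\equiv-1\pmod r$ for all $v\in\V(R^*)$, $u\in\V(R)$. Choose a vertex $u_1$ of $R$ lying on the facet $G_v$; then $\pro{v}{u_1}=-1$, whereas $u-u_1\in rM$ by divisibility of $R$ by $r$, so $\pro{v}{u-u_1}\in r\Z$ and hence $\pro{v}{u}=-1+\pro{v}{u-u_1}\equiv-1\pmod r$. Combined with $\pro{v}{u}\ge 0$, the number $\pro{v}{u}$ is a nonnegative integer congruent to $-1$ modulo $r$, so $\pro{v}{u}\ge r-1$; taking the minimum yields $\delta_{R^*}\ge r-1$. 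The step that carries the content is this congruence; it is elementary, but it must invoke the divisibility hypothesis for a difference of two \emph{vertices of $R$} rather than of $P$, and that is exactly what produces the factor $r$ and hence the bound $r-1$. Beyond this, the only care needed is the bookkeeping behind the abusive notation $(rP)^*$ for the dual of the reflexive representative, so I do not anticipate a genuine obstacle.
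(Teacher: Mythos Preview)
Your proof is correct. The paper does not actually prove this lemma; it merely cites it as Lemma~2 in Casagrande's paper~\cite{casagrande}, so there is no ``paper's own proof'' to compare against. Your argument---reducing the bound to the congruence $\pro{v}{u}\equiv -1\pmod r$, which follows from the observation that all vertices of the reflexive representative $R$ of $rP$ lie in a single coset of $rM$---is the standard elementary route and is essentially what one finds in Casagrande.
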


From these results it is straightforward to deduce an upper bound on the number of vertices of a simple Gorenstein polytope of small Calabi-Yau dimension.

\begin{proposition}
\label{main-prop}
Let $P$ be a simple Gorenstein polytope of Calabi-Yau dimension $n$ and of dimension $d > 3 n+3$. 
Then $(rP)^*$ is a simplex or has $d+2$ vertices.
\end{proposition}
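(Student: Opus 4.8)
The plan is to deduce the proposition directly from the two results quoted just above, namely Lemma~\ref{inequality} and Casagrande's Theorem~\ref{casa}, together with one elementary arithmetic estimate. Set $Q := (rP)^*$. Since $P$ is a simple Gorenstein polytope of index $r$, Lemma~\ref{inequality} immediately gives that $Q$ is a $d$-dimensional simplicial reflexive polytope with $\delta_Q \ge r-1$, so the whole task reduces to bounding $|\V(Q)|$ from above by $d+2$.

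Before invoking Theorem~\ref{casa} I would check that its hypothesis $\delta_Q>0$ holds. The assumption $d>3n+3$ is equivalent to $r>\frac{d+3}{3}$, as noted in the text; since $d\ge 1$ this forces $r\ge 2$, hence $\delta_Q\ge r-1\ge 1>0$, and Theorem~\ref{casa} applies, yielding
\[
|\V(Q)| \;\le\; d + \frac{d}{\delta_Q} \;\le\; d + \frac{d}{r-1}.
\]
Now for the arithmetic: $r>\frac{d+3}{3}$ is equivalent to $3(r-1)>d$, and since $r-1>0$ this is in turn equivalent to $\frac{d}{r-1}<3$. Substituting into the displayed bound gives $|\V(Q)|<d+3$, so the integer $|\V(Q)|$ is at most $d+2$. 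Since every $d$-dimensional polytope has at least $d+1$ vertices, with equality exactly for simplices, we conclude that $Q=(rP)^*$ is either a simplex or has precisely $d+2$ vertices.

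I do not expect a genuine obstacle here: the substantive content is entirely carried by the cited inputs (the lower bound $\delta_{(rP)^*}\ge r-1$ and Casagrande's vertex estimate). The only points that need care are the bookkeeping ones — confirming $r-1>0$ so that Theorem~\ref{casa} is applicable and so that dividing the inequality $3(r-1)>d$ by $r-1$ preserves its direction — and recalling that a simplicial (indeed any) $d$-polytope with only $d+1$ vertices is forced to be a simplex, which is exactly what turns the bound $|\V(Q)|\le d+2$ into the stated dichotomy.
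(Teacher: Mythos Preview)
Your proof is correct and follows essentially the same route as the paper: apply Lemma~\ref{inequality} to get $\delta_{(rP)^*}\ge r-1$, check $r\ge 2$, invoke Casagrande's Theorem~\ref{casa}, and use the arithmetic equivalence $d>3n+3\Leftrightarrow \frac{d}{r-1}<3$ to conclude $|\V(Q)|\le d+2$. The only cosmetic difference is that the paper rewrites $\frac{d}{r-1}$ as $\frac{2d}{d-1-n}$ via $r=\frac{d+1-n}{2}$ before performing the same estimate, and leaves the simplex alternative implicit.
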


\begin{proof}

If $r=1$, then $n=d-1$, so $3n+3=3d > d$, a contradiction. Hence, $r > 1$. 
Let us consider the simplicial reflexive polytope $Q := (r P)^*$. Then Theorem~\ref{casa} and Lemma~\ref{inequality} imply
\[|\V(Q)| \le d + \frac{d}{r-1} = d + \frac{2d}{d-1-n}\,.\]
The statement follows now from
\[\frac{2d}{d-1-n} < 3 \;\;\lolra\;\; 3 n + 3 < d\,.\qedhere\]
\end{proof}

\begin{remark}
\label{triple}
Let us note that the bound is sharp: for any $n\ge 0$, the smooth Gorenstein polytope $P=S_{n+1} \times S_{n+1} \times S_{n+1}$ has dimension $d=3n+3$, Calabi-Yau dimension $n$ and $d+3$ 
vertices.
\end{remark}

\subsection{Proof of Theorem~\ref{main-theo}}

By Proposition~\ref{main-prop} we only have to consider two cases in order to prove Theorem~\ref{main-theo}: either $P$ is a simplex or it 
has $d+2$ facets.

\begin{proposition}
\label{prop-simplex}
Let $P$ be a smooth Gorenstein simplex of Calabi-Yau dimension $n$ and of dimension $d \ge 3n$. Then either $n<0$ and $P\cong S_d$, or $n=0$ and $P \cong 2 S_d$ (with odd $d$).
\end{proposition}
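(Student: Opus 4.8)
The plan is to first classify smooth $d$-simplices up to unimodular equivalence, then to record when such a simplex is Gorenstein and what its index is, and finally to feed the inequality $d \ge 3n$ into that description.

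\emph{Classification of smooth simplices.} Let $P$ be a smooth $d$-simplex with vertices $v_0,\ldots,v_d$. After a unimodular transformation I may take $v_0 = \zerov$ with primitive edge directions $e_1,\ldots,e_d$ at $v_0$, so that $v_i = \lambda_i e_i$ for positive integers $\lambda_i$. The smoothness condition at the vertex $v_j$ says that $-e_j$ together with the primitive vectors $(\lambda_i e_i - \lambda_j e_j)/\gcd(\lambda_i,\lambda_j)$, $i \ne j$, form a lattice basis; a direct computation shows that the sublattice they generate is $\Z e_j \oplus \bigoplus_{i\ne j}\tfrac{\lambda_i}{\gcd(\lambda_i,\lambda_j)}\Z e_i$, so smoothness at $v_j$ is equivalent to $\lambda_i \mid \lambda_j$ for all $i \ne j$. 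Letting $j$ range over all vertices forces $\lambda_1 = \cdots = \lambda_d =: k$, hence $P \cong k\,S_d$; conversely each $k\,S_d$ is smooth, and since $k^d$ is its normalized volume, $k$ is an invariant of $P$.

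\emph{Gorenstein simplices and their index.} For $k\,S_d$, the dilate $(mk)\,S_d$ has an interior lattice point iff $mk \ge d+1$, and a translate of $m\,S_d$ is reflexive iff $m = d+1$ (it has a unique interior lattice point only then, and the translate of $(d+1)\,S_d$ by $-\onev$ is indeed reflexive). Hence $k\,S_d$ is Gorenstein iff $k \mid d+1$, with index $r = (d+1)/k$; in that case $n = d+1-2r = r(k-2)$, equivalently $d = rk - 1$.

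\emph{Using the hypothesis.} Substituting $d = rk-1$ and $n = r(k-2)$ into $d \ge 3n$ gives $rk - 1 \ge 3r(k-2)$, i.e.\ $2r(3-k) \ge 1$; since $r \ge 1$ this forces $k \le 2$. For $k = 1$ we get $P \cong S_d$ and $n = -(d+1) < 0$; for $k = 2$ we get $2 \mid d+1$ (so $d$ is odd), $P \cong 2\,S_d$ and $n = 0$. This is exactly the claim.

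The substantive step is the classification of smooth simplices; the point to be careful about there is the bookkeeping with the greatest common divisors when writing down the primitive edge directions at a vertex $v_j \ne v_0$, so that the lattice-basis requirement collapses precisely to the divisibilities $\lambda_i \mid \lambda_j$. The remaining two steps are an elementary computation with dilated simplices and a one-line inequality.
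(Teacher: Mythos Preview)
Your proof is correct and follows the same overall route as the paper: classify smooth $d$-simplices as dilates $kS_d$, read off the index $r=(d+1)/k$ and Calabi--Yau dimension $n=r(k-2)$, and then let the inequality $d\ge 3n$ force $k\le 2$.

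The only genuine difference is in the first step. The paper simply invokes toric geometry (``$X_P\cong\P^d$, so as is well-known $P\cong kS_d$''), whereas you supply a self-contained combinatorial argument by analyzing the smoothness condition at each vertex and reducing it to the divisibilities $\lambda_i\mid\lambda_j$. Your bookkeeping there is correct: after clearing the $e_j$-component with $-e_j$, the generated sublattice really is $\Z e_j\oplus\bigoplus_{i\ne j}\tfrac{\lambda_i}{\gcd(\lambda_i,\lambda_j)}\Z e_i$, and requiring this to be all of $\Z^d$ is exactly $\lambda_i\mid\lambda_j$. This buys you an elementary proof independent of the toric dictionary, at the cost of a few extra lines; the paper's version is shorter but presupposes that the reader knows the ample line bundles on $\P^d$ and their polytopes. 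The numerical endgame (your $2r(3-k)\ge 1$ versus the paper's $d+1=\tfrac{k}{k-2}\,n\le 3n$) is just a cosmetic rearrangement of the same identity $rk=d+1$.
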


\begin{proof}
In this case, $X_P \cong \P^d$. As is well-known, $P \cong k S_d$ for some $k \in \Z_{\ge 1}$. 
Since, $r P \cong (d+1) S_d$, we get $rk=d+1$. Therefore, $\frac{d+1-n}{2} k = d+1$. For $n \le -1$, we get $k<2$, so $k=1$. 
For $n=0$, we have $k=2$, as desired. Finally, let $n > 0$, so $k \ge 3$. In particular, 
$\frac{k}{k-2} \le 3$. This implies
\[d+1  = \frac{k}{k-2} n \le 3n\,,\]
a contradiction.
\end{proof}

The following statement can be deduced from \cite[Theorem~2.6]{BN08}. Recall that $S_d$ is a smooth Gorenstein polytope of index $d+1$.

\begin{lemma}
\label{cayley-comb}
If $b_1, \ldots, b_r \in \Z_{>0}$, $\sum_{i=1}^r b_i = d+2-r$, and $r \in \{1, \ldots, d+1\}$, then 
\[b_1 S_{d+1-r} * \cdots * b_r S_{d+1-r}\,,\]
is a smooth $d$-dimensional Gorenstein polytope of index $r$ such that $(rP)^*$ has $d+2$ vertices.
\end{lemma}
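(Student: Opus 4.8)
The plan is to read off all three assertions from a single normal form for $P := b_1 S_{d+1-r} * \cdots * b_r S_{d+1-r}$; equivalently, and in keeping with the excerpt, to deduce them from \cite[Theorem~2.6]{BN08}. Throughout write $s := d+1-r$, so that, by the remark preceding the lemma, $S_s$ is a smooth Gorenstein polytope of index $s+1$, and $s+1 = d+2-r = \sum_{i=1}^r b_i$ by hypothesis. Since the affine span of the Cayley factors $b_i S_s$ is all of $\R^s$, Definition~\ref{def-cay} already gives that $P$ is a $d$-dimensional lattice polytope, $d = s+(r-1)$.

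For the Gorenstein property with the stated index I would invoke \cite[Theorem~2.6]{BN08}: each Cayley factor $b_iS_s$ is the $b_i$-th dilate of the single Gorenstein polytope $S_s$ whose index $\sum_i b_i$ equals the prescribed sum, and that theorem then yields a Gorenstein polytope whose index is the number of Cayley factors, i.e.\ $r$. For a self-contained combinatorial proof I would instead write down $rP$ explicitly. Writing a point of $\R^s\oplus\R^r$ as $(x,y)$, one checks from the definition of the Cayley polytope that the fibre of $P$ over a given $y$ (with $y\ge\zerov$, $\sum_i y_i = 1$) is the dilated unimodular simplex $(\sum_i b_i y_i)\,S_s$, and hence
\[
rP=\bigl\{(x,y)\ :\ x_j\ge 0\ \forall j,\ \ y_i\ge 0\ \forall i,\ \ \tsum_i y_i = r,\ \ \tsum_j x_j\le \tsum_i b_i y_i\bigr\}.
\]
Translating by the interior lattice point $(\onev,\onev)$ and using $\sum_i b_i = s+1$, the translate is cut out, inside the saturated sublattice $\{\sum_i y_i = 0\}$, by the inequalities $x_j\ge -1$, $y_i\ge -1$, and $\sum_j x_j-\sum_i b_i y_i\le 1$, each with primitive normal vector (after reducing modulo the functional $\sum_i y_i$); the origin lies in the relative interior. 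Thus $rP$ is reflexive up to translation, so $P$ is Gorenstein, and the index is $r$ since it is uniquely determined by this property.

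Smoothness and the vertex count of $(rP)^*$ follow from the same description. From the inequalities one sees that at each vertex of the translated $rP$ exactly $d$ facets are tight and that their (reduced) inner normals --- a subcollection of the vectors $e_1^*,\dots,e_s^*$, $f_1^*,\dots,f_r^*$ (the coordinate functionals on $\R^r$), and the normal of the last facet --- form a lattice basis of the dual of $\{\sum_i y_i = 0\}$; equivalently, the normal fan of $P$ is that of the smooth projective bundle $\P_{\P^s}(\mathcal O(b_1)\oplus\cdots\oplus\mathcal O(b_r))$ over $\P^s$. Since $(rP)^*$ is reflexive, its number of vertices equals the number of facets of $rP$, hence of $P$; the description lists the $s+1$ ``simplex'' facets $x_j\ge -1$ $(1\le j\le s)$ and $\sum_j x_j\le\sum_i b_i y_i$ together with the $r$ ``Cayley'' facets $y_i\ge -1$, so $|\V((rP)^*)|=(s+1)+r=d+2$. (For $r\ge 2$ --- the case used in Theorem~\ref{main-theo} --- all listed inequalities are genuine facets; for $r=1$ one simply has $P\cong(d+1)S_d$ with $d+1$ facets.)

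I expect the only real work to be the verification underlying the second and third paragraphs: that the fibrewise description of $P$ and of $rP$ is correct, that the listed $d+2$ inequalities are irredundant, and that every vertex of the translated $rP$ is a lattice point with a unimodular vertex cone --- equivalently, setting up the identification with the projective bundle carefully. This is routine but is where all the bookkeeping lies; leaning on \cite[Theorem~2.6]{BN08} for the Gorenstein property and the index disposes of the most delicate part and leaves only the straightforward dimension, smoothness, and facet-count checks.
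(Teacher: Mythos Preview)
Your approach is correct and matches the paper's: the paper's entire proof is the single sentence ``The following statement can be deduced from \cite[Theorem~2.6]{BN08},'' and you invoke exactly this reference for the Gorenstein property and index. Your additional explicit description of $rP$, the smoothness via the projective-bundle identification (cf.\ Lemma~\ref{cayley-alggeo}), and the facet count are all sound elaborations of what the paper leaves implicit; you also correctly flag the degenerate case $r=1$, where $P\cong (d+1)S_d$ has only $d+1$ facets rather than $d+2$, a boundary case the lemma's statement glosses over but which is irrelevant for its use in Theorem~\ref{main-theo}.
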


In our situation, the converse also holds.

\begin{proposition}
\label{prop-circuit}
Let $P$ be a smooth Gorenstein polytope of dimension $d \ge 3n+3$ such that $(rP)^*$ has $d+2$ vertices. Then 
$n \ge -1$ and there exists a unique integer partition $a_1, \ldots, a_t$ of $n+1$ such that 
\[P \cong (a_1+1) S_{d+1-r} * \cdots * (a_t+1) S_{d+1-r} * S_{d+1-r} * \cdots * S_{d+1-r}\,,\]
where there are $r$ Cayley factors.
\end{proposition}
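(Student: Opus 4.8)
\textit{Setup and strategy.} Put $Q:=(rP)^*$; by Lemma~\ref{inequality} this is a simplicial reflexive $d$-polytope with $\delta_Q\ge r-1$, and by hypothesis $|\V(Q)|=d+2$. Since $P$ is smooth, the face fan of $Q$ (the normal fan of $P$) is unimodular, so $X_P$ is a smooth projective toric variety of Picard number $|\V(Q)|-d=2$. My plan is to proceed in four steps: (1) rule out $n\le-2$ with Casagrande's bound; (2) use the classification of Picard-number-two toric varieties to put $Q$ into a normal form displaying a special simplex; (3) combine the divisibility forced by the index $r$ with the hypothesis $d\ge3n+3$ to read off the Cayley decomposition and the partition; (4) observe uniqueness.

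\textit{Steps 1 and 2.} If $n\le-2$ then $r\ge(d+3)/2$, hence $\delta_Q\ge r-1\ge(d+1)/2>0$, and Theorem~\ref{casa} gives $|\V(Q)|\le d+2d/(d+1)<d+2$, a contradiction; thus $n\ge-1$ (and, as in the proof of Proposition~\ref{main-prop}, $r\ge2$). Next, by Kleinschmidt's classification \cite{kleinschmidt} --- equivalently, by analysing the one-dimensional Gale diagram of $\V(Q)$ together with reflexivity and unimodularity --- there is a lattice basis $v_1,\dots,v_p,u_1,\dots,u_q$ of $N$ with $p+q=d$, $p,q\ge1$, and integers $0\le c_1\le\dots\le c_p$ with $c_1+\dots+c_p\le q$, such that, writing $v_0:=-(v_1+\dots+v_p)$,
\[
\V(Q)=\bigl\{\,v_0,v_1,\dots,v_p,\ u_1,\dots,u_q,\ -(u_1+\dots+u_q)+\tsum_i c_iv_i\,\bigr\},
\]
and a proper subset of $\V(Q)$ is a face of $Q$ precisely when it omits at least one of $v_0,\dots,v_p$ and at least one of the remaining $q+1$ vertices. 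In particular $v_0+v_1+\dots+v_p=\zerov$ and every facet of $Q$ contains all but one of $v_0,\dots,v_p$, so $\sigma:=\conv(v_0,v_1,\dots,v_p)$ is a special $p$-simplex of $Q$.

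\textit{Step 3.} Dualising, $\widetilde P:=rP=Q^*$ is the simple $d$-polytope cut out by $\pro{w}{\cdot}\ge-1$, $w\in\V(Q)$; its $(p+1)(q+1)$ vertices are computed in the coordinates dual to $v_1,\dots,v_p,u_1,\dots,u_q$ by intersecting, for each omitted $v$ and omitted $u$, the remaining $d$ facet hyperplanes. The projection to the $v$-coordinates maps $\widetilde P$ onto a lattice translate of $(p+1)S_p$, the fibre over its $i$-th vertex being a lattice translate of $\widetilde b_iS_q$ with $\widetilde b_0=q+1-\tsum_i c_i$ and $\widetilde b_i=\widetilde b_0+(p+1)c_i$ for $i\ge1$; that is, $\widetilde P$ is the Cayley polytope of $\widetilde b_0S_q,\dots,\widetilde b_pS_q$ over the dilated simplex $(p+1)S_p$. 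Since $P$ arises from $\widetilde P$ by translating a vertex to the origin and scaling by $1/r$, and $P$ is a lattice polytope, all vertices of $\widetilde P$ lie in a single coset of $rM$; comparing them forces $r\mid p+1$ and $r\mid q+1-\tsum_i c_i$. Writing $p+1=r\beta$: the case $\beta\ge3$ gives $q=d-p=(2r+n-1)-(3r-1)=n-r<0$ since $r\ge n+2$ (this is exactly $d\ge3n+3$), which is impossible; the case $\beta=2$ gives $q=n$, but then $q+1-\tsum_i c_i$ is a positive integer $\le q+1=n+1<r$ divisible by $r$, again impossible; hence $\beta=1$, i.e.\ $p+1=r$ and $q=d+1-r$. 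The same size count shows that $q+1-\tsum_i c_i$, a positive multiple of $r$ at most $q+1=d+2-r=n+r+1<2r$, equals $r$, so $\tsum_i c_i=n+1$ and $\widetilde b_0=r$, $\widetilde b_i=r(c_i+1)$. Dividing by $r$ now exhibits $P$, up to unimodular equivalence, as the Cayley polytope $S_q*(c_1+1)S_q*\dots*(c_p+1)S_q$ with $p+1=r$ factors and $q=d+1-r$ (in agreement with Lemma~\ref{cayley-comb}, since $\tsum_i(\widetilde b_i/r)=d+2-r$). Reindexing the positive $c_i$ as $a_1,\dots,a_t$ gives the asserted form with $a_1+\dots+a_t=\tsum_i c_i=n+1$; when $n=-1$ this forces $t=0$ and $P\cong S_{d+1-r}*\dots*S_{d+1-r}$.

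\textit{Uniqueness and the main obstacle.} The multiset $\{a_1,\dots,a_t\}$ is the multiset of positive twisting numbers of the normal form of Step~2, hence an isomorphism invariant of $X_P$ and therefore of $P$ (alternatively, this is the last assertion of Theorem~\ref{main-theo}, proved separately). I expect the main difficulty to be Step~3: carrying out the vertex computation for $\widetilde P$ cleanly, verifying that ``$rP$ divisible by $r$'' amounts to exactly the two displayed congruences, and --- the one genuinely quantitative point --- using $d\ge3n+3$ to exclude $p+1\in\{2r,3r,\dots\}$ and the ``over-dilated-base'' possibilities $\tsum_i c_i<n+1$; each of these produces smooth Gorenstein polytopes of index $r$ with $d+2$ vertices that are genuinely not of the stated form once $d<3n+3$, so the bound cannot be dropped. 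Setting up the Picard-number-two normal form by hand, rather than citing \cite{kleinschmidt}, is the other place that needs care, though it is routine.
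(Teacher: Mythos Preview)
Your proof is correct and follows essentially the same route as the paper: invoke Kleinschmidt's normal form for a smooth toric variety of Picard number two, read off the divisibility conditions $r\mid p+1$ and $r\mid q+1-\sum_i c_i$ from the lattice structure of $P=(rP-v)/r$, and then use the numerical hypothesis $d\ge 3n+3$ (equivalently $r\ge n+2$) to force $p+1=r$ and $\sum_i c_i=n+1$. The paper argues the last step by bounding $d+2-m\le 3r-1$ directly, while you case on $\beta=(p+1)/r$; these are cosmetically different but identical in substance. Your Step~1 (ruling out $n\le-2$ via Casagrande's inequality) is correct but redundant, since Step~3 already yields $\sum_i c_i=n+1\ge 0$; likewise the paper obtains $n\ge-1$ only at the end. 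One tiny sloppiness: in your $\beta\ge3$ case you compute $q$ only for $\beta=3$; the inequality should read $q\le n-r$, but the conclusion is unaffected.
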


Recall that we consider integer partitions as multisets (so uniqueness is up to permutation).

\begin{proof}
By Kleinschmidt's classification of nonsingular toric Fano varieties whose associated fan has $d+2$ rays \cite{kleinschmidt} (see also Section 7.3 in \cite{toricbook}), we can assume that the vertices of the reflexive polytope $(rP)^*$ are of the following form:
\[
e_1, \ldots, e_d, \qquad v_1 = - e_1 - \cdots - e_k, \qquad
v_2 = a_1 e_1 + \cdots + a_k e_k - e_{k+1} - \cdots - e_d,
\]
where $1 \le k \le d-1$, and $a_i \in \Z_{\ge 0}$ for $1 \le i \le k$ such that
$ m := \sum_{i=1}^k a_i \le d-k\,. $ 

\smallskip
Combinatorially, $(rP)^*$ is a free sum (i.e., dual to the product) of a $k$-dimensional simplex and a $(d-k)$-dimensional simplex. 
This implies that the vertices of $rP$ are of the following types, where all entries not specified are equal to $-1$ 
(here $i \in \{1, \ldots, k\}$, $j \in \{k+1, \ldots, d\}$):
\begin{center}\small
\newcolumntype{C}{>{\begin{math}}c<{\end{math}}}
\begin{tabular}{CCCCCCCl}
       & &i& &j& & &corresponding facet(s) of $(rP)^*$\\
       \midrule
   (-1,&\ldots&-1,&\ldots&-1,&\ldots&-1)&not containing $v_1$ and $v_2$\\
   (-1,&\ldots&-1,&\ldots&d-k-m,&\ldots&-1)&not containing $v_1$ and $e_j$\\
   (-1,&\ldots&k,&\ldots&-1,&\ldots&-1)&not containing $e_i$ and $v_2$\\
   (-1,&\ldots&k,&\ldots&a_i(k+1)+d-k-m,&\ldots&-1)&not containing $e_i$ and $e_j$
\end{tabular}
\end{center}

Let us translate the first vertex $v$ of $rP$ into the origin. This yields the following vertices for the lattice polytope $(rP - v)/r$ (which is isomorphic to $P$):
\begin{center}
\newcolumntype{C}{>{\begin{math}}c<{\end{math}}}
\begin{tabular}{CCCCCCC}
       & &i& &j& & \\
       \midrule
   (0,&\ldots&0,&\ldots&0,&\ldots&0)\\
   (0,&\ldots&0,&\ldots&\frac{d+2-m-(k+1)}{r},&\ldots&0)\\
   (0,&\ldots&\frac{k+1}{r},&\ldots&0,&\ldots&0)\\
   (0,&\ldots&\frac{k+1}{r},&\ldots&\frac{(a_i -1)(k+1)+d+2-m}{r},&\ldots&0)
\end{tabular}
\end{center}

Note that $r$ divides $k+1$, and thus $d+2-m$. By our assumption 
\[ 3n+3 \le d=n+2r-1 \quad \Longrightarrow \quad n+1 \le r-1,\]
so $d+2-m \le d+2 =n+2r+1 \le 3r-1$. Hence, $d+2-m$ equals $r$ or $2r$. 
Furthermore, by our assumption $m\le d-k$, so $d-m+2 > k+1$, thus,
\[0 < \frac{d+2-m}{r} - \frac{k+1}{r} \le 1,\]
which implies $d+2-m=2r$ and $k+1=r$. Therefore, $P$ is isomorphic to the Cayley polytope 
\[S_{d+1-r} * (a_1+1) S_{d+1-r} * \cdots * (a_k+1) S_{d+1-r},\]
where $\sum_{i=1}^k a_i = m=n+1$. Moreover, by construction, the polytopes do not depend on the ordering of these numbers, so by 
choosing $t \le k$ non-zero $a_i$'s the result follows. Finally, different multisets of coefficients define non-isomorphic Cayley polytopes, 
since
\[(-a_1+1)e_1 + \cdots + (-a_k+1)e_k + e_{k+1} + \cdots + e_d + v_1 + v_2 = 0\]
is the unique affine relation of the vertices of $(rP)^*$, see also \cite{kleinschmidt}. 
\end{proof}

\begin{proof}[Proof of Theorem~\ref{main-theo}]

If $r > \frac{d+3}{3}$, then Proposition~\ref{prop-simplex} implies cases (1) and (2), while Proposition~\ref{prop-circuit} yields (3) for $n=-1$ (all $a_i$'s are equal to $0$), and (4) for $n \ge 0$. The `if'-statement follows for (4) from Lemma~\ref{cayley-comb}.\end{proof}


It is a somewhat lucky coincidence that the essentially same bound appears quite naturally in Propositions~\ref{main-prop} and \ref{prop-circuit}. One should also compare this with the fact \cite[Lemma~2]{skarke} that any so-called basic IP weight system (the building blocks of Gorenstein polytopes in Skarke's classification algorithm) satisfy $d \le 3n-1$.

\section{Applications to Fano manfolds and Calabi-Yau varieties}

\subsection{Toric Fano manifolds of large index}

Let us prove Proposition~\ref{main-fano} by translating Theorem~\ref{main-theo} into algebraic geometry. 

\begin{lemma}
\label{cayley-alggeo}
Let $s,l \in \Z_{>0}$, and $c_1, \ldots, c_l \in \N$. Then 
\[S_s * (c_1+1) S_s * \cdots *  (c_l+1) S_s \subset \MR\]
defines a toric projective bundle
\[\P_{\P^s}(O \oplus O(c_1) \cdots \oplus O(c_l))\,.\]
\end{lemma}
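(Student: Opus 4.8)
The plan is to recall the standard toric-geometric dictionary and then identify the Cayley polytope in question with the polytope of a projective bundle. First I would recall Kleinschmidt's description of the toric variety $X = \P_{\P^s}(O(c_1) \oplus \cdots \oplus O(c_l) \oplus O)$: its fan lives in a lattice of rank $s + l$, with rays $e_1, \ldots, e_s$ (the "$\P^s$-directions"), $f_1, \ldots, f_l$ (the "fiber directions"), together with $u_0 = -e_1 - \cdots - e_s$ and $u_1 = -f_1 - \cdots - f_l + c_1 e_1 + \cdots$ — i.e., precisely the configuration appearing in the proof of Proposition~\ref{prop-circuit}. Its anticanonical polytope is therefore the free sum (dual to a product) of an $s$-simplex and an $l$-simplex, and the divisor class computation shows that the associated reflexive polytope is divisible. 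The polytope $P$ whose $r$th dilate gives this reflexive polytope, with $r = l+1$ Cayley factors, is exactly $(rP-v)/r$ computed in that proof.

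Concretely, I would argue as follows. Set $r := l+1$ and let $Q := (rP)^*$ where $P = S_s * (c_1+1)S_s * \cdots * (c_l+1)S_s$; here $d = s + l$ and $d+1-r = s$. By Lemma~\ref{cayley-comb} (applied with the multiset of $b_i$'s being $1, c_1+1, \ldots, c_l+1$, so $\sum b_i = 1 + \sum(c_i+1) = l+1+\sum c_i = d+2-r$, using $\sum c_i = d+2-2r = s-1+l - (l+1) +\ldots$ — I'd verify the arithmetic matches $m = \sum c_i$), $Q$ is a simplicial reflexive polytope with $d+2$ vertices, and by the explicit vertex description reproduced in the proof of Proposition~\ref{prop-circuit} its vertices are $e_1, \ldots, e_d$, $v_1 = -e_1 - \cdots - e_k$, $v_2 = a_1 e_1 + \cdots + a_k e_k - e_{k+1} - \cdots - e_d$ with $k = r - 1 = l$, $a_i = c_i$, and $m = \sum c_i \le d - k = s$. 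The fan over the faces of $Q^* = $ (free sum complement) $= $ the Cayley/projected fan is then recognized via Kleinschmidt's theorem \cite{kleinschmidt} (or \cite[\S7.3]{toricbook}) as the fan of $\P_{\P^s}(O \oplus O(c_1) \oplus \cdots \oplus O(c_l))$. Finally I would check the divisibility/index bookkeeping: the toric variety of a reflexive polytope $Q$ in $N$ is $X_{\Sigma_{Q^*}}$, whose anticanonical polytope in $M$ is $Q^*$; since $Q^* = rP$ up to lattice translation, $X$ is the toric Fano variety attached to $P$, and the Cayley structure with $r$ factors corresponds to $-K_X$ being divisible by $r = l+1$ (one $O(1)$ from the base $\P^s$ and one per fiber summand).

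The main obstacle I expect is purely notational: matching the \emph{ordering and normalization} of the rays/vertices between the form $S_s * (c_1+1)S_s * \cdots * (c_l+1)S_s$ used in this lemma (with the "plain" factor $S_s$ written first) and the form $(a_1+1)S_{d+1-r} * \cdots$ used in Proposition~\ref{prop-circuit} and Theorem~\ref{main-theo}, and then checking that Kleinschmidt's parameters $(k, a_1, \ldots, a_k)$ translate to $(l, c_1, \ldots, c_l)$ and that the twist $O(c_i)$ on the $i$th fiber coordinate (versus its dual) comes out with the right sign. Since Cayley factors commute up to isomorphism and a global twist $\P(\mathcal{E}) \cong \P(\mathcal{E} \otimes L)$ for a line bundle $L$, both ambiguities are harmless, but I would state explicitly that $\P_{\P^s}(O \oplus O(c_1) \oplus \cdots \oplus O(c_l))$ is well-defined up to these standard identifications. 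Everything else — reflexivity, smoothness, the index count — is either immediate from Lemma~\ref{cayley-comb} or a direct consequence of the explicit vertex coordinates already recorded in the proof of Proposition~\ref{prop-circuit}, so no genuinely new computation is required beyond transcribing that data into the language of projective bundles.
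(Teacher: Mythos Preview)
The paper does not actually prove this lemma: it simply cites Kleinschmidt \cite{kleinschmidt} and \cite[Section~7.3]{toricbook} for the fact. So there is no argument in the paper to compare against beyond ``look it up in the standard references.''

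Your sketch is morally the right one --- identify the normal fan of the Cayley polytope with Kleinschmidt's fan for the projective bundle --- but as written it has a real gap. You route the identification through Lemma~\ref{cayley-comb} and the explicit vertex list computed in the proof of Proposition~\ref{prop-circuit}. Both of those inputs presuppose that $P$ is \emph{Gorenstein of index $r=l+1$}, i.e., that $\sum b_i = d+2-r$, which here forces $\sum c_i = s-l$. (Your own parenthetical ``I'd verify the arithmetic matches $m=\sum c_i$'' is exactly where this constraint appears, and it does \emph{not} hold for arbitrary $c_i\in\N$.) Likewise, forming $Q=(rP)^*$ and reading off its fan only makes sense once $rP$ is reflexive; for generic $c_i$ the anticanonical class of the bundle is not ample and there is no reflexive polytope to dualize. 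So your argument, as it stands, proves the lemma only on the Gorenstein locus.

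The easy fix is to bypass the reflexive machinery entirely: compute the inner normal fan of $S_s * (c_1+1)S_s * \cdots * (c_l+1)S_s$ directly from the Cayley description (the facets are indexed by pairs consisting of a facet of $S_s$ and a choice of Cayley factor, independently of the dilation parameters), observe that the resulting ray generators are exactly the $e_i$, $v_1$, $v_2$ you wrote down, and then invoke \cite{kleinschmidt} or \cite[\S7.3]{toricbook} to recognize this fan as that of $\P_{\P^s}(O\oplus O(c_1)\oplus\cdots\oplus O(c_l))$. Equivalently, note that the normal fan of a Cayley polytope of positive dilates of a fixed simplex does not depend on the dilation factors, so the general case reduces to any single convenient one --- for instance the Gorenstein case you already handled.
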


\label{fanosec}

For a proof of this fact see \cite{kleinschmidt} or \cite[Section 7.3]{toricbook}.

\begin{proof}[Proof of Proposition~\ref{main-fano}]

Let $X$ be a toric Fano $d$-fold with $r := i_X > \frac{d+3}{3}$. Then there exists a smooth Gorenstein polytope $P \subset \MR$ such that 
the reflexive polytope $rP$ is associated to the anticanonical divisor $-K_X$. Now, we apply Theorem~\ref{main-theo} and the previous Lemma. The converse statement follows 
from combining the previous Lemma with Lemma~\ref{cayley-comb}.
\end{proof}

Let us note that we recover a result of Wi\'sniewski \cite{fanolargeindex} in this toric setting. 

\begin{cor}
\label{fano-large}
Let $X$ be a toric Fano $d$-fold with $i_X \ge \frac{d+1}{2}$. Then $X$ is isomorphic to one of the following cases:
\begin{itemize}
\item $\P^1 \times \P^1 \times \P^1$, $d=3$, and $i_X=2$
\item $\P_{\P^{\frac{d+1}{2}}}(O(1) \oplus O^{\frac{d-1}{2}})$, $d$ is odd, and $i_X=\frac{d+1}{2}$
\item $\P^{\frac{d}{2}} \times \P^{\frac{d}{2}}$, $d \ge 4$ is even, and $i_X=\frac{d+2}{2}$
\item $\P^d$, and $i_X=d+1$
\end{itemize}
\end{cor}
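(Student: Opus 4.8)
The plan is to read the statement off Proposition~\ref{main-fano}. Note first that $\frac{d+1}{2} > \frac{d+3}{3}$ holds exactly when $d > 3$; hence for $d \ge 4$ the hypothesis $i_X \ge \frac{d+1}{2}$ already forces $i_X > \frac{d+3}{3}$, so Proposition~\ref{main-fano} applies and it remains only to retain those members of its list satisfying $i_X \ge \frac{d+1}{2}$. The projective space $\P^d$ (with $i_X = d+1$) and the product $\P^{\frac{d}{2}} \times \P^{\frac{d}{2}}$ for even $d \ge 4$ (with $i_X = \frac{d+2}{2}$) clearly meet this bound and account for the last two bullets.

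For the remaining family $X \cong \P_{\P^{d+1-r}}\bigl(O(a_1) \oplus \cdots \oplus O(a_t) \oplus O^{r-t}\bigr)$ with $i_X = r$ and $\frac{d+3}{3} < r \le \frac{d+1}{2}$, I would combine $r = i_X \ge \frac{d+1}{2}$ with $r \le \frac{d+1}{2}$ to conclude $r = \frac{d+1}{2}$; thus $d$ is odd and $d+2-2r = 1$, so the integer partition $a_1, \ldots, a_t$ of $d+2-2r$ has a single part $a_1 = 1$ (further parts, if any, are $0$ and are absorbed into the trivial summands $O^{r-t}$). This yields precisely $X \cong \P_{\P^{\frac{d+1}{2}}}\bigl(O(1) \oplus O^{\frac{d-1}{2}}\bigr)$, the second bullet, so for $d \ge 4$ the four listed cases are exhaustive.

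It remains to cover the situations not reached by Proposition~\ref{main-fano}: the small dimensions $d \le 2$, and the borderline case $d = 3$, $i_X = 2$, where $\frac{d+1}{2} = \frac{d+3}{3}$ and the index bound is no longer strict. For these I would invoke the classical classification of toric Fano manifolds of dimension at most three --- equivalently, the small-$d$ entries of Corollary~\ref{cor-fano} --- which leaves only $\P^1$ for $d = 1$; $\P^2$ and $\P^1 \times \P^1$ (the $d=2$ instance of $\P^{\frac{d}{2}} \times \P^{\frac{d}{2}}$) for $d = 2$; and $\P^1 \times \P^1 \times \P^1$ together with $\P_{\P^2}(O(1) \oplus O)$ for $d = 3$, $i_X = 2$ (the remaining three-dimensional case $\P^3$, with $i_X = 4$, having been covered already). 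All of these occur in the stated list. I do not anticipate a real obstacle anywhere --- the corollary is essentially Proposition~\ref{main-fano} specialised to the sharper bound $i_X \ge \frac{d+1}{2}$ --- the only points needing a little care being the bookkeeping of the partition of $1$ and the handful of small dimensions.
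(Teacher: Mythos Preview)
Your approach is essentially the paper's own: for $d>3$ apply Proposition~\ref{main-fano} and filter its list by the sharper bound, while the borderline $d=3$ is handled by the known classification (the paper simply cites the database here). Your bookkeeping on the partition of $d+2-2r=1$ is exactly the calculation the paper leaves implicit in ``follow immediately''. One small remark: the paper's proof does not treat $d\le 2$ at all, and your observation that $\P^1\times\P^1$ is the $d=2$ instance of the third bullet conflicts with the stated restriction $d\ge 4$ there --- but this is a quirk of how the corollary is phrased rather than a gap in your argument.
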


\begin{proof}
The case $d=3$ is well-known, it can also be easily checked using the database. 
Since $d > 3$ is equivalent to $\frac{d+1}{2} > \frac{d+3}{3}$, the statements follow immediately from Proposition~\ref{main-fano}.
\end{proof}

\subsection{Calabi-Yau complete intersections}

\label{CICY}

Let us recall the so-called Ba\-ty\-rev-Borisov construction (see \cite{BN08} for a survey). Let $\Delta \subset \MR$ be a reflexive polytope of dimension $s$ and lattice polytopes $\Delta_1, \ldots, \Delta_r \subset \MR$ such that 
\[\Delta = \Delta_1 + \cdots + \Delta_r,\]
where we add sets pointwise (Minkowski summation). Now, the intersection of the generic hypersurfaces given by the generic Laurent polynomials 
\[F_{\Delta_i}(z) := \sum_{m \in \Delta_i \cap M} c_m X^m \quad\quad (\text{for } c_m \in \C^*),\]
for $i=1,\ldots, r$ defines a complete intersection in $(\C^*)^s$, whose compactification in $\P^s$ is a Calabi-Yau variety $Y$ (a CICY) of dimension $n := s-r$. On the other hand, one can define the Cayley polytope $P := \Delta_1 * \cdots * \Delta_r$, which is a Gorenstein polytope of dimension $s+r-1$ and of index $r$. Note that $P$ has 
Calabi-Yau dimension $d+1-2r=n$. Now, the stringy $E$-polynomial of $Y$ equals the stringy $E$-polynomial of $P$ (as defined in \cite{BN08} based on \cite{stringcohomology}, see also~\cite{schepers}). 

\medskip

The proof of Theorem~\ref{main-hodge} will be a direct consequence of the following observation. It is straightforward from an algebro-geometric viewpoint, however 
it seems to be quite a challenge to prove it using combinatorics only!

\begin{lemma}
\label{reduction}
Let $b_2 + \cdots + b_r = s$ (all these numbers being positive integers). Then 
\[S_s * b_2 S_s * \cdots * b_r S_s \quad\text{ and }\quad b_2 S_{s-1} * \cdots * b_r S_{s-1}\]
are smooth Gorenstein polytopes with the same Calabi-Yau dimension and the same stringy $E$-polynomial. More precisely, the CICY's given by 
$S_s + b_2 S_s + \cdots + b_r S_s$, respectively by $b_2 S_{s-1} + \cdots + b_r S_{s-1}$, are isomorphic.
\end{lemma}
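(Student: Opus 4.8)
The plan is to unwind both sides as explicit Calabi--Yau complete intersections and observe that they are literally the same variety. First I would identify the CICY attached to the Minkowski datum $S_s + b_2 S_s + \cdots + b_r S_s$. Here $\Delta := S_s + b_2 S_s + \cdots + b_r S_s = (1 + b_2 + \cdots + b_r) S_s = (s+1) S_s$ is the reflexive polytope associated to $\P^s$ (the dilate of the unimodular simplex by $s+1$), so the ambient toric Fano variety is $\P^s$ itself. The $i$-th hypersurface is cut out by a generic Laurent polynomial supported on $\Delta_i$, which is $S_s$ for $i=1$ and $b_i S_s$ for $i \ge 2$; after compactifying in $\P^s$ these become generic hypersurfaces of degrees $1, b_2, \ldots, b_r$. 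A degree-$1$ hypersurface in $\P^s$ is a generic hyperplane $H \cong \P^{s-1}$, and intersecting with it simply restricts the remaining equations to $\P^{s-1}$. Thus the CICY is a generic complete intersection of hypersurfaces of degrees $b_2, \ldots, b_r$ in $\P^{s-1}$, of dimension $s - r = n$.

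Next I would do the same for $b_2 S_{s-1} + \cdots + b_r S_{s-1}$: the Minkowski sum is $(b_2 + \cdots + b_r) S_{s-1} = s \cdot S_{s-1}$, which is the reflexive polytope of $\P^{s-1}$, and the hypersurfaces have degrees $b_2, \ldots, b_r$ there. So this is, term for term, the \emph{same} generic complete intersection of degrees $b_2, \ldots, b_r$ in $\P^{s-1}$; hence the two CICY's are isomorphic, which is the ``more precisely'' clause. For the statement about the Gorenstein polytopes themselves: both are smooth Gorenstein polytopes by Lemma~\ref{cayley-comb} (with $r$ Cayley factors and $1 + b_2 + \cdots + b_r = s + 1 = (s)+2-r$ for the first, $r-1$ factors and $b_2 + \cdots + b_r = s = (s-1)+2-(r-1)$ for the second), both of index matching their number of Cayley factors, and one checks directly that $d_1 + 1 - 2r_1 = (s + r - 1) + 1 - 2r = s - r = n$ and $d_2 + 1 - 2r_2 = (s - 1 + (r-1) - 1) + 1 - 2(r-1) = s - r = n$, so the Calabi--Yau dimensions agree. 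Equality of stringy $E$-polynomials then follows because $\Est(P)$ of a Gorenstein polytope arising from a Batyrev--Borisov datum equals the stringy $E$-polynomial of the associated CICY (as recalled in Section~\ref{CICY}), and the two CICY's are isomorphic.

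The only genuinely delicate point is making the reduction ``intersect with the degree-one hypersurface'' fully rigorous on the level of the Batyrev--Borisov construction rather than just informally: one must check that the generic hyperplane section of $\P^s$ is, with the induced lattice and polytope data, exactly the $\P^{s-1}$ one gets from the polytopes $b_i S_{s-1}$, and that ``generic'' is preserved. Concretely, choosing the linear form $F_{S_s}$ to be $1 - X^{e_j}$ for a coordinate hyperplane identifies $\{F_{S_s} = 0\}$ with the subtorus, whose closure is a coordinate $\P^{s-1}$, and restricting a generic Laurent polynomial supported on $b_i S_s$ to it yields a generic Laurent polynomial supported on $b_i S_{s-1}$ (the image of $b_i S_s$ under the projection forgetting the $j$-th coordinate). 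I expect this bookkeeping with supports and lattices to be the main obstacle, but it is routine; everything else is immediate from the identifications $(s+1)S_s \leftrightarrow \P^s$ and $s\,S_{s-1} \leftrightarrow \P^{s-1}$ together with Lemma~\ref{cayley-comb} and the identity $\Est(P) = \Est(Y)$.
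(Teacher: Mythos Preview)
Your proof is correct and follows exactly the same approach as the paper's own proof, which is even terser: it simply observes that $S_s + b_2 S_s + \cdots + b_r S_s$ gives a CICY in $\P^s$ of degrees $1, b_2, \ldots, b_r$, identifies the degree-$1$ hypersurface with $\P^{s-1}$, and concludes. Your additional bookkeeping (Calabi--Yau dimensions, invocation of Lemma~\ref{cayley-comb}, the careful discussion of restricting supports) is all reasonable extra detail; note, however, that the parenthetical checks ``$s+1 = (s)+2-r$'' and ``$s = (s-1)+2-(r-1)$'' are miswritten --- the correct identities are $s+1 = (s+r-1)+2-r$ and $s = (s+r-3)+2-(r-1)$, using the actual dimensions $d_1 = s+r-1$ and $d_2 = s+r-3$ of the two Cayley polytopes.
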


\begin{proof}
In this case, the Minkowski sum $S_s + b_2 S_s + \cdots + b_r S_s$ defines a generic Calabi-Yau complete intersection $Y$ in $\P^s$ 
given by generic hypersurfaces of degrees $1,b_2,\ldots,b_s$. Identifying the generic hypersurface of degree $1$ with $P^{s-1}$, we note that $Y$ can be regarded as a generic complete intersection of hypersurfaces of degrees $b_2, \ldots, b_s$ in $\P^{s-1}$. This finishes the proof.
\end{proof}

\begin{proof}[Proof of Theorem~\ref{main-hodge}]

Note that $n \ge 0$ if and only if $r \le \frac{d+1}{2}$, so only cases (2) and (4) may occur in Theorem~\ref{main-theo}. In case (2), we have $n=0$, and $\Est(2 S_d)=2=\Est([-1,1])$, see e.g.~Example~4.12 in \cite{BN08}. Hence, it suffices to consider case (4). We may assume that $a_1, \ldots, a_t$ are all positive integers (since their sum equals $n+1 \ge 1$). Now, Lemma~\ref{reduction} shows that it suffices to consider $t=r$ (note again that $a_i=0$ for $i >t$). Therefore,  $n+1=\sum_{i=1}^r a_i \ge r$, thus $d = (d+1-2r)+2r-1=n+2r-1 \le n+2(n+1)-1=3n+1$. This proves the first statement. The second claim follows from $s = d+1-r =n+r \le 2n+1$ and the proof of Lemma~\ref{reduction}. 

For the last statement of the theorem, let $P$ be a Gorenstein polytope associated to a Minkowski decomposition $\Delta = \Delta_1 + \cdots + \Delta_r$ as above, i.e.,
\begin{equation}P = \Delta_1 * \cdots * \Delta_r\,. \label{cayley1}\end{equation}
Let $d > 3n+3$. Again, we have two cases. 
If $n=0$, then $P \cong 2 S_d$, hence $r=1$ (since $2 S_d$ is not a Cayley polytope). Therefore, $d=2$, a contradiction 
to $d > 3n+3$. Therefore, we are in case (4) of Theorem~\ref{main-theo}, so by Definition~\ref{def-cay}
\begin{equation}P \cong \conv((a_1+1) S_s \times e_1, \ldots, (a_t+1) S_s \times e_t, S_s \times e_{t+1}, \ldots, S_s \times e_r),\label{cayley2}\end{equation}
where  $a_1, \ldots, a_t$ is an integer partition of $n+1$. Equation (\ref{cayley1}) implies the 
existence of a surjective, affine lattice homomorphism $\phi \;:\; P \to S_{r-1}$.  We may assume $a_1 > 0$. 
Since for every vertex of $(a_1+1)S_s \times e_1$ every adjacent edge contains a lattice point in its relative interior, $\phi((a_1+1)S_s \times e_1)$ has to be a vertex of $S_{r-1}$. Hence,  $S_s \times e_1$ lies in the fiber space of $\phi$. Therefore, every factor in expression (\ref{cayley2}) maps onto a vertex via $\phi$. Since 
the map is surjective and there are $r$ factors, it follows that these two Cayley decompositions are the same (up to a permutation of the factors). Hence, the Minkowski decomposition of $\Delta$ equals up to permutation and translations by lattice points the Minkowski decomposition $(s+1)S_s = (a_1+1) S_s + \cdots + (a_t+1) S_s + S_s + \cdots + S_s$. This implies the statement (e.g., as in the proof of Lemma~\ref{reduction}).
\end{proof}

In the notation of \cite{BB97,BN08,skarke}, this shows that for $d > 3n+3$ the `reflexive Gorenstein cone' over $P$ is `completely split'.

\section{The classification algorithm for smooth Gorenstein polytopes of given index}

Let us briefly describe the modified version of the algorithm to classify smooth Fano polytopes by Mikkel \O{}bro~\cite{oebro}.
The key ingredient for his algorithm is the notion of a special facet.
\begin{definition}
Let $P$ be a reflexive polytope, a facet $F$ of $P$ is called \emph{special}, if the sum of all vertices $v_1,\ldots,v_k$ of $P$ is a point within the cone over the facet $F$, i.e., $\sum_{i=0}^k v_i \in \cone(F)\,$.
\end{definition}
Clearly, every reflexive polytope has at least one special facet.
Now, let $P$ be a smooth reflexive $d$-polytope. Then the dual polytope $P^*$ is a simplicial reflexive polytope where the vertices of each facet form a lattice basis of $\Z^d$. Let us denote here such reflexive polytopes as {\em dual-smooth}. 
By applying a unimodular transformation, we can assume that $\conv(e_1,\ldots,e_d)$ is a special facet of $P^*$.
\O{}bro has shown that all remaining vertices of such a dual-smooth reflexive polytope must lie within some explicit finite set.
The original algorithm enumerates this set in a clever way to efficiently generate all dual-smooth reflexive polytopes.
Now, because minus the all-one vector defines the special facet we used for the canonicalization (since it evaluates to $-1$), by Lemma~\ref{inequality} we know that every other vertex $v$ of $P^*$ must evaluate to $\pro{-\onev}{v} \ge r-1$.
Using this property we can further restrict the set of possible vertices. 
By adding this condition to the original algorithm we were able generate a superset of all simplicial reflexive polytopes whose dual polytopes are $r$-multiples of smooth Gorenstein polytopes for given dimension $d$ and index $r$ larger than some $r_0$. For $d=10$ and $r\ge3$ this algorithm 
took about 1 hour. This should be compared to two weeks for the original (parallelized) algorithm without additional conditions for $d=9$. 
Table~\ref{fano-table} contains all isomorphism classes for values of $d$ and $r$ that we could compute so far.

\section{Realization of stringy Hodge numbers by Calabi-Yau complete intersections}

Table~\ref{fano-table} combined with Theorem~\ref{main-theo} gives a complete enumeration of all smooth Gorenstein polytopes $P$ with 
Calabi-Yau dimension $0 \le n \le 3$. In this section, we would like to discuss which stringy Hodge numbers of these smooth Gorenstein polytopes 
are possibly realized by Calabi-Yau complete intersections.

\smallskip

First, let us recall that every Gorenstein polytope $P$ has a {\em dual Gorenstein polytope} $P^\times$ of same dimension, same index, and same Calabi-Yau dimension $n$, see \cite{BB97} or \cite{BN08}, satisfying the combinatorial 
mirror symmetry 
\[\Est(P;u,v) = (-u)^n \Est(P^\times;u^{-1},v)\,.\]
In particular, for $n=3$, the stringy Hodge numbers $(h^{1,1},h^{1,2})$ are interchanged between $P$ and $P^\times$, see \cite{BB96, BN08, schepers}. Figure~\ref{hodge-P-dual-table} shows the complete list of pairs $(h^{1,1},h^{1,2})$ of stringy Hodge numbers of $P^\times$ for $n=3$ and $d \le 12$. Note that by Theorem~\ref{main-hodge} this is the complete list for $n=3$.

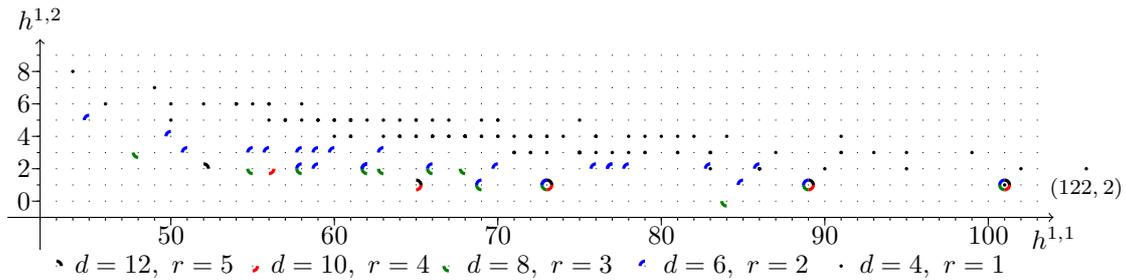
\begin{figure}[tbp]
\begin{center}
{ \footnotesize
\begin{tikzpicture}[scale=0.215]

\draw[->] (40,-1) -- (104,-1) node[anchor=north] {$h^{1,1}$};
\draw[->] (42,-3) -- (42,10) node[anchor=south] {$h^{1,2}$};

\foreach \i in {42,43,...,103} {
	\draw (\i,-1.2) -- (\i,-1);
}
\foreach \i in {0,1,2,...,9} {
	\draw (41.8,\i) -- (42,\i);
}

\foreach \i in {50,60,...,100} {
	\draw (\i,-1.5) -- (\i,-1) node[anchor=north] {$\i$};
}
\foreach \i in {0,2,4,...,8} {
	\draw (41.5,\i) -- (42,\i) node[anchor=east] {$\i$};
}

\foreach \i in {43,44,...,103} {
	\foreach \j in {0,1,2,...,9} {
		\fill (\i,\j) circle (1pt);
	}
}

\foreach \pair in 
{(99,3),(67,4),(87,3),(91,4),(67,5),(81,4),(79,4),(95,3),(93,3),(69,4),(82,4),(73,4),(91,3),(73,4),(70,4),(61,4),(76,3),(83,3),(71,3),(95,2),(57,5),(66,4),(68,4),(57,5),(46,6),(75,3),(61,4),(84,4),(65,5),(76,4),(72,4),(91,3),(72,4),(59,5),(58,5),(61,4),(50,5),(60,4),(66,4),(68,4),(73,3),(72,3),(79,3),(64,4),(75,3),(86,2),(75,5),(58,6),(69,5),(80,4),(78,4),(61,5),(70,5),(56,6),(70,5),(56,6),(66,5),(62,5),(76,4),(60,5),(52,6),(64,5),(60,5),(65,5),(54,6),(50,6),(63,5),(57,5),(71,4),(68,4),(72,4),(64,4),(81,3),(55,6),(49,7),(44,8),(60,5),(54,6),(61,5),(65,4),(74,4),(68,4),(56,5),(64,4),(72,3),(65,4),(69,4),(68,4),(68,4),(70,4),(59,5),(63,4),(78,3),(76,4),(61,5),(72,4),(61,5),(54,6),(59,5),(66,4),(67,4),(66,4),(75,3),(83,3),(83,3),(79,3),(81,3),(71,3),(68,4),(64,4),(75,3),(75,3),(71,3),(86,2),(102,2),(77,3),(86,2),(68,4),(75,3),(90,2),(86,2),(83,2),(101,1)}
{
	\fill[black] \pair circle (3pt);
}

\fill[black] (106,2) node[anchor=north] {{\tiny$(122,2)$}} circle (3pt) ;

\foreach \pair in 
{(101,1),(86,2),(78,2),(77,2),(83,2),(77,2),(63,3),(59,2),(85,1),(76,2),(70,2),(58,2),(59,3),(62,2),(73,1),(60,3),(56,3),(58,3),(50,4),(51,3),(62,2),(62,2),(45,5),(55,3),(66,2),(89,1),(73,1),(69,1)}
{
	\draw[blue,very thick] \pair +(0.0,0.3) arc [radius=0.3,start angle=90,end angle=180];
	
}


\foreach \pair in 
{(101,1),(89,1),(73,1),(68,2),(58,2),(62,2),(66,2),(55,2),(63,2),(69,1),(48,3),(84,0)}
{
	\draw[green!50!black,very thick] \pair +(-0.3,0.0) arc [radius=0.3,start angle=180,end angle=270];
}


\foreach \pair in 
{(101,1),(89,1),(73,1),(65,1),(56,2)}
{
	\draw[red,very thick] \pair +(0.0,-0.3) arc [radius=0.3,start angle=270,end angle=360];
}


\foreach \pair in
{(101,1),(89,1),(73,1),(65,1),(52,2)}
{
	\draw[black,very thick] \pair +(0.3,0.0) arc [radius=0.3,start angle=0,end angle=90];
}

\fill (43,-4) circle (1pt) node[anchor=west] {$\ d = 12,\ r=5$};
\fill (55,-4) circle (1pt) node[anchor=west] {$\ d = 10,\ r=4$};
\fill (67,-4) circle (1pt) node[anchor=west] {$\ d = 8,\ r=3$};
\fill (79,-4) circle (1pt) node[anchor=west] {$\ d = 6,\ r=2$};
\fill[fill=black] (91,-4) circle (3pt) node[anchor=west] {$\ d = 4,\ r=1$};
\draw[blue,very thick] (79,-4) +(0.0,0.3) arc [radius=0.3,start angle=90,end angle=180];
\draw[green!50!black,very thick] (67,-4) +(-0.3,0.0) arc [radius=0.3,start angle=180,end angle=270];
\draw[red,very thick] (55,-4) +(0.0,-0.3) arc [radius=0.3,start angle=270,end angle=360];
\draw[black,very thick] (43,-4) +(0.3,0.0) arc [radius=0.3,start angle=0,end angle=90];
\end{tikzpicture}}\vspace{-2mm}
\caption{Stringy Hodge numbers of the duals of smooth Gorenstein polytopes with Calabi-Yau dimension 3.\label{hodge-P-dual-table}}
\end{center}
\end{figure}

\medskip
To compute the stringy Hodge numbers of these polytopes we implemented the formula for the stringy $E$-polynomial described in~\cite[Definition 4.8]{BN08} in the \polymake framework~\cite{polymake}, using an interface to Normaliz~\cite{normaliz2,normaliz2paper} for the Ehrhart $h^*$-polynomials.

\begin{remark}{\rm 
We note that for $n=3$ the $12$-dimensional smooth Gorenstein polytope $P$ from Remark~\ref{triple} has Hodge-pair $(2,52)$ which did not appear for any of the polytopes of lower dimension, indicating that 
the bound $3n+3$ in Theorem~\ref{main-hodge} could indeed be sharp. However, for general $n$ we cannot rule out yet that the stringy $E$-polynomial of this example might appear for some smooth Gorenstein polytope of lower dimension. In fact, for $n=3$ there exists a $6$-dimensional (non-smooth) Gorenstein polytope with this pair of Hodge numbers~\cite{skarke-database}.
}
\end{remark}


As it turns out, all of these `virtual' stringy Hodge numbers indeed equal the Hodge numbers of Calabi-Yau manifolds. 
To prove this, let us recall the definition of normality.

\begin{definition}A $d$-dimensional lattice polytope $P \subseteq \R^d$ is {\em normal} (or {\em integrally closed}), 
if the semigroup of lattice points in the cone $C_P := \R_{\ge0}(P \times \{1\}) \subset \R^{d+1}$ is generated by lattice points in $C_P \cap \R^d \times \{1\}$.\label{def-normal}
\end{definition}

\begin{proposition}
\label{realizable}
Let $P$ be a Gorenstein polytope of Calabi-Yau dimension $n$. If $P$ is normal, then there exists a Calabi-Yau variety $Y^*$ of dimension $n$ such that $\Est(P^\times)$ equals the stringy $E$-polynomial of $Y^*$. Moreover, if $n\le 3$, then we can assume that $Y^*$ is smooth.
\end{proposition}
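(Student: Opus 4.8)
The plan is to realize $\Est(P^\times)$ as a stringy $E$-function by applying the Batyrev--Borisov construction to the reflexive Gorenstein cone attached to $P$, using normality to promote the combinatorial output to an honest variety, and then a crepant resolution to obtain smoothness when $n\le 3$. Since $P$ has index $r$, the polytope $rP$ is reflexive, so $\sigma:=\cone(P\times\{1\})\subseteq\R^{d+1}$ is a reflexive Gorenstein cone of index $r$, with dual cone $\sigma^\vee$ whose associated Gorenstein polytope is $P^\times$. By the Batyrev--Borisov construction \cite{Bor93,BB96,BB97} (see also \cite{stringcohomology,BN08,schepers}), the dual pair $(\sigma,\sigma^\vee)$ carries a mirror pair of generic Calabi--Yau complete intersections of dimension $n$, sitting in Gorenstein toric varieties built from the two cones; let $Y^*$ be the member with $\Est(Y^*)=\Est(P^\times)$, sitting in a Gorenstein toric variety $\mathbb P$. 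If $P=\Delta_1*\cdots*\Delta_r$ actually comes from a nef partition, $Y^*$ is the classical Batyrev--Borisov mirror CICY of the complete intersection $\Delta_1+\cdots+\Delta_r$; in general one uses the version of the construction phrased directly in terms of the cone.

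Normality is what is needed for this combinatorially defined $Y^*$ to be a genuine variety. Indeed, $P$ is normal precisely when $\sigma\cap\Z^{d+1}$ is generated by its lattice points at level one, which is exactly the positivity making the line bundles that cut out $Y^*$ on $\mathbb P$ base-point-free (equivalently, $\mathbb P$ projectively normal in the relevant embedding). Granting this, Bertini's theorem shows the generic $Y^*$ is reduced, irreducible, of the expected dimension $n$ and smooth away from $\mathrm{Sing}(\mathbb P)$, and adjunction on $\mathbb P$ gives $K_{Y^*}\sim 0$; so $Y^*$ is a Calabi--Yau variety of dimension $n$ with $\Est(Y^*)=\Est(P^\times)$, which is the first claim. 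For the large-index polytopes of Theorem~\ref{main-theo} no general cone machinery is needed: only cases~(2) and~(4) have $n\ge 0$; in case~(2) one has $\Est(P^\times)=\Est(2S_d)=2$, realized by two reduced points, and in case~(4), $P$ is a nef-partition Cayley polytope, so $Y^*$ is the Batyrev--Borisov mirror of a generic complete intersection of hypersurfaces of degrees $a_1+1,\dots,a_t+1$ in projective space (cf.\ the proof of Theorem~\ref{main-hodge}); the remaining tabulated polytopes can be treated individually.

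For the last assertion, assume $n\le 3$. If $n=0$, then $Y^*$ is a finite reduced point set, hence smooth. If $1\le n\le 3$, take a maximal projective crepant partial resolution $\pi\colon\widehat{\mathbb P}\to\mathbb P$ \cite{Bat94}; then $\widehat{\mathbb P}$ is Gorenstein with at worst terminal singularities, so $\mathrm{Sing}(\widehat{\mathbb P})$ has codimension $\ge 4$. Pulling the defining nef line bundles back to $\widehat{\mathbb P}$ and taking a generic complete intersection $\widehat{Y^*}$ of dimension $n\le 3$, Bertini shows $\widehat{Y^*}$ is smooth at the smooth locus of $\widehat{\mathbb P}$, and it misses the locus $\mathrm{Sing}(\widehat{\mathbb P})$ of codimension $\ge 4$; hence $\widehat{Y^*}$ is a smooth projective Calabi--Yau $n$-fold. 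As $\widehat{Y^*}\to Y^*$ is crepant birational, and the stringy $E$-function is a crepant birational invariant that coincides with the ordinary $E$-polynomial on smooth projective varieties, $E(\widehat{Y^*})=\Est(Y^*)=\Est(P^\times)$, as required.

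I expect the main obstacle to be the normality step in full generality: one must make precise, in the general (not necessarily nef-partition) Batyrev--Borisov cone setting, that normality of $P$ is exactly what forces the relevant linear systems to be base-point-free, so that the combinatorial $Y^*$ is an honest variety with trivial canonical class and the predicted stringy $E$-polynomial -- in the nef-partition and index-one cases this realization is automatic, but in general it has to be assembled with care. A secondary point is to check that one may always arrange the construction so that it is normality of $P$ (rather than of $P^\times$) that is used; for the finitely many polytopes actually needed in this paper, however, the explicit description above is enough.
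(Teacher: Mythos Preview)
Your argument has a genuine gap in how normality enters. You posit a ``general cone version'' of the Batyrev--Borisov construction that produces a variety $Y^*$ inside some $\mathbb P$, and then say normality of $P$ makes the cutting line bundles base-point-free. But there is no such variety to begin with unless $P^\times$ carries a Cayley structure: the stringy $E$-polynomial $\Est(P^\times)$ is defined purely combinatorially for any Gorenstein polytope, while an actual CICY requires a decomposition $P^\times=\nabla_1*\cdots*\nabla_r$ (equivalently, a nef-partition on the dual side). Your phrase ``this combinatorially defined $Y^*$'' has no referent, and the line-bundle/Bertini discussion is applied to an object that has not been constructed. You in fact flag this yourself in your closing paragraph.

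The paper uses normality in a sharper and more direct way: normality of $P$ means the unique interior lattice point of $rP$ is a sum of $r$ lattice points of $P$, and by \cite[Cor.~2.12]{BN08} this is exactly the datum that makes $P^\times$ a Cayley polytope $\nabla_1*\cdots*\nabla_r$. Then $Q:=\nabla_1+\cdots+\nabla_r$ is reflexive of dimension $s=d+1-r$, and the generic complete intersection $Y^*$ of the corresponding nef divisors in the Gorenstein toric Fano variety $X^*$ associated to $Q$ is the desired Calabi--Yau variety with $\Est(Y^*)=\Est(P^\times)$. No base-point-freeness argument for $P$ itself is needed; the nef line bundles live on $X^*$, not on a variety built from $\sigma$. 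Your treatment of the case $n\le3$ via an MPCP desingularization is essentially the paper's argument and is fine once $Y^*$ has been correctly produced.
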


\begin{proof}

As follows from \cite[Cor.2.12]{BN08}, $P^\times$ is a Cayley polytope of length $r$, say, $P^\times = \nabla_1 * \cdots * \nabla_r$ in the notation of \cite{BN08}. 
Therefore, e.g.~by \cite[Thm.2.6]{BN08}, $\nabla_1 + \cdots + \nabla_r =: Q \subset \NR$ is a reflexive polytope of dimension $s := d+1-r$. In particular, there exists an associated generic $n$-dimensional CICY $Y^*$ in the Gorenstein Fano toric variety $X^*$ associated to $Q$. By the very definition of the stringy $E$-polynomial of Gorenstein polytopes, $\Est(Y^*)=\Est(P^\times)$. 

\smallskip

Let $n \le 3$. Let us recall the argument given in \cite{Bat94}. We choose a maximal projective crepant partial desingularization (MPCP) $\widehat{X^*}$ of $X^*$ in the sense of \cite{Bat94}. This induces an MPCP from a CICY $\widehat{Y^*}$ in $\widehat{X^*}$ to $Y^*$, where $\widehat{Y^*}$ is again a Calabi-Yau variety whose stringy $E$-polynomial equals $\Est(Y^*)$. Note that the cones of dimension $\le 3$ of the fan corresponding to $\widehat{X^*}$ are unimodular, so the toric strata of $\widehat{X^*}$ of dimension $\ge s-3$ are smooth 
(e.g., \cite[Thm.2.2.9]{Bat94}). Since $\widehat{Y^*}$ is generic of dimension $\le 3$, it avoids toric strata of dimension $< s-3$, hence $\widehat{Y^*}$ is smooth.
\end{proof}

There is a famous open conjecture: {\em Smooth polytopes are normal}. We verified it for all smooth Gorenstein polytopes we could compute. In particular, by Theorem~\ref{main-hodge} this observation implies:

\label{realization}
\begin{cor}
All stringy $E$-polynomials of duals of smooth Gorenstein polytopes with Calabi-Yau dimension $n\le 3$ equal the $E$-polynomial of some Calabi-Yau manifold of dimension $n$ in a Gorenstein toric Fano variety of dimension $s \le 2n+2$.
\end{cor}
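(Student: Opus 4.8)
The plan is to reduce to a finite list of polytopes via Theorem~\ref{main-hodge}, to apply Proposition~\ref{realizable} using the (computationally verified) normality of the polytopes that remain, and finally to bound the dimension of the ambient Gorenstein toric Fano variety.

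First I would reduce the dimension. Let $P$ be a smooth Gorenstein polytope of Calabi-Yau dimension $n \le 3$. If $d > 3n+3$, Theorem~\ref{main-hodge} produces a smooth Gorenstein polytope $P'$, again of Calabi-Yau dimension $n$ but of dimension $d' \le 3n+1$, with $\Est(P) = \Est(P')$; if $d \le 3n+3$ I simply set $P' := P$, so in either case $d' \le 3n+3$. Applying the combinatorial mirror symmetry $\Est(P;u,v) = (-u)^n\,\Est(P^\times;u^{-1},v)$ to $P$ and to $P'$ gives $\Est(P^\times) = \Est((P')^\times)$, so it is enough to realize $\Est((P')^\times)$ by a Calabi-Yau manifold. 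Since $d' \le 3n+3 \le 12$, the polytope $P'$ is one of the finitely many entries of Table~\ref{fano-table}; in the reduced case it is moreover visibly of Cayley type, namely a Cayley sum of dilated unimodular simplices coming from a complete intersection of generic hypersurfaces in some $\P^{\tilde s}$ with $\tilde s \le 2n+1$.

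Next I would feed $P'$ into Proposition~\ref{realizable}. All smooth Gorenstein polytopes occurring in Table~\ref{fano-table} have been checked to be normal, so $P'$ is normal, and since $n \le 3$ the proposition yields a smooth Calabi-Yau variety $Y^*$ of dimension $n$, realized as a generic complete intersection in a Gorenstein toric Fano variety $X^*$, with the $E$-polynomial of $Y^*$ equal to $\Est((P')^\times) = \Est(P^\times)$. It remains to bound $\dim X^*$: the construction in the proof of Proposition~\ref{realizable} exhibits $X^*$ as the toric variety of a reflexive polytope of dimension $s := d' + 1 - r'$, where $r'$ is the index of $P'$; substituting $r' = (d'+1-n)/2$ (from $n = d'+1-2r'$) gives $s = (d'+1+n)/2$, and $d' \le 3n+3$ forces $s \le 2n+2$, which is exactly the asserted bound.

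The genuinely delicate input is the normality hypothesis of Proposition~\ref{realizable}: whether every smooth polytope is normal is a well-known open problem, so the argument above is unconditional only because, after the reduction of Theorem~\ref{main-hodge}, normality is needed for just the finitely many polytopes with $n \le 3$ and $d \le 3n+3$, which can be --- and has been --- verified directly. A conjecture-free version would instead have to establish integral closedness of the explicit Cayley polytopes $(a_1+1)S_{d+1-r} * \cdots * (a_t+1)S_{d+1-r} * S_{d+1-r} * \cdots * S_{d+1-r}$ together with the handful of non-Cayley exceptions such as $S_{n+1} \times S_{n+1} \times S_{n+1}$ and $2S_d$; the products and dilated simplices are harmless, so the only real content would be a normality statement for Cayley sums of dilated unimodular simplices.
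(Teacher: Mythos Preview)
Your proof is correct and follows essentially the same route as the paper: reduce to $d'\le 3n+3$ via Theorem~\ref{main-hodge}, invoke the computationally verified normality together with Proposition~\ref{realizable}, and read off the bound $s=(d'+1+n)/2\le 2n+2$. The paper's own proof is terser---it only writes out the reduction step ``$s\ge 2n+3 \Rightarrow d\ge 3n+5$, so Theorem~\ref{main-hodge} applies''---and leaves implicit the passage from $\Est(P)=\Est(P')$ to $\Est(P^\times)=\Est((P')^\times)$; your explicit use of the combinatorial mirror symmetry identity to justify that step is a welcome clarification rather than a different argument.
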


\begin{proof}
It remains to consider the case $s \ge 2n+3$, hence $r=s-n \ge n+3$, thus $d=s+r-1 \ge 3n+5$, in which case Theorem~\ref{main-hodge} 
implies the statement.
\end{proof}

\begin{remark}
\label{hodge-polytopes} Among the Hodge numbers listed in Figure~\ref{hodge-P-dual-table} the pairs 
\[(84,0), (85,1), (52,2), (69,1), (65,1), (55,2), (63,2)\]
are not yet contained in the online database `Calabi-Yau $3$-fold explorer' by Benjamin Jurke \cite{jurke}. The associated smooth Gorenstein polytopes $P$ can be found on the webpage \cite{sgpdb}. We note that for the first five of these pairs Gorenstein polytopes with these stringy Hodge numbers can also be found in \cite{skarke-database}.
\end{remark}

\begin{remark} Among the stringy Hodge-numbers of the smooth Gorenstein polytopes themselves (i.e., not their duals), 
only the pairs $(1,69)$ and $(1,85)$ are not contained in the database of known Hodge numbers of Calabi-Yau $3$-folds \cite{jurke}. However, all three corresponding dual Gorenstein polytopes $P^\times$ are neither normal, nor do they contain a special $(r-1)$\nobreakdash-simplex (in the sense of \cite{BN08}). Hence, we cannot deduce whether there are CICY's associated to these smooth Gorenstein polytopes $P$.
\end{remark}

\begin{remark}{\rm The complete intersection Calabi-Yau manifolds described in Theorem~\ref{main-hodge} are given by a so-called {\em nef-partition}, see \cite{Bor93, BN08}. In particular, there exist Calabi-Yau manifolds with mirror-symmetric Hodge numbers. This does not have to be the 
case in general. For instance, there is a generic CICY with Hodge numbers $(84,0)$ (see Remark~\ref{hodge-polytopes}), 
however, there is no CICY with Hodge numbers $(0,84)$. We refer to \cite{BB97} for more on this rigidity-phenomenon.
}
\end{remark}

\def\bibfont{\small}
\bibliography{smooth-gorenstein-polytopes-arxiv-v1}{}
\bibliographystyle{amsalpha}
\end{document}